\documentclass[reqno,11pt]{amsart}
\usepackage{newtxtext}
\usepackage{float,amssymb,bm,url}
\usepackage {tikz}
\usetikzlibrary{arrows,positioning}

\usetikzlibrary{cd,calc,positioning,angles,decorations.markings}
\tikzcdset{arrow style=tikz, diagrams={>=stealth}}

\tikzset{
sedge/.append style={shorten <=9pt, shorten >=9pt}
}
\makeatletter
\tikzset{
  @arc through/.style 2 args={
    to path={
      \pgfextra
        \pgfextract@process\pgf@tostart{\tikz@scan@one@point\pgfutil@firstofone(\tikztostart)\relax}%
        \pgfextract@process\pgf@tothrough{\tikz@scan@one@point\pgfutil@firstofone#1}%
        \pgfextract@process\pgf@totarget{\tikz@scan@one@point\pgfutil@firstofone(\tikztotarget)\relax}%
        \pgfextract@process\pgf@topointMidA{\pgfpointlineattime{.5}{\pgf@tostart}{\pgf@tothrough}}%
        \pgfextract@process\pgf@topointMidB{\pgfpointlineattime{.5}{\pgf@totarget}{\pgf@tothrough}}%
        \pgfextract@process\pgf@tocenter{%
          \pgfpointintersectionoflines{\pgf@topointMidA}
            {\pgfmathrotatepointaround{\pgf@tothrough}{\pgf@topointMidA}{90}}
            {\pgf@topointMidB}{\pgfmathrotatepointaround{\pgf@tothrough}{\pgf@topointMidB}{90}}}%
        \pgfcoordinate{arc through center}{\pgf@tocenter}%
        \pgfpointdiff{\pgf@tocenter}{\pgf@tostart}%
        \pgfmathveclen@{\pgfmath@tonumber\pgf@x}{\pgfmath@tonumber\pgf@y}%
        \edef\pgf@toradius{\pgfmathresult pt}
        \pgfmathanglebetweenpoints{\pgf@tocenter}{\pgf@tostart}%
        \let\pgf@tostartangle\pgfmathresult
        \pgfmathanglebetweenpoints{\pgf@tocenter}{\pgf@totarget}%
        \let\pgf@toendangle\pgfmathresult
        \ifdim\pgf@tostartangle pt>\pgf@toendangle pt\relax
          \pgfmathsetmacro\pgf@tostartangle{\pgf@tostartangle-360}%
        \fi
        #2%
          \pgfmathsetmacro\pgf@toendangle{\pgf@toendangle-360}%
        \fi
      \endpgfextra
      arc [radius=+\pgf@toradius, start angle=\pgf@tostartangle, end angle=\pgf@toendangle] \tikztonodes
    }},
  arc through ccw/.style={@arc through={#1}{\iffalse}},
  arc through cw/.style={@arc through={#1}{\iftrue}},
}
\makeatother

\tikzset{middlearrow/.style={
        decoration={markings,
            mark= at position 0.5 with {\arrow{#1}} ,
        },
        postaction={decorate}
    }
}

\tikzset{
vertex1/.style ={circle, inner sep=1.5pt, fill=white, draw},
vertex2/.style ={circle, inner sep=1.5pt, fill=black, draw},
every loop/.append style={min distance=12mm,shorten <=6pt, shorten >=6pt}
}

\newcommand{\Rbb}{\mathbb{R}}
\newcommand{\Zbb}{\mathbb{Z}}

\newcommand{\ud}{\,\mathrm{d}}
\newcommand{\p}{_{\ge0}}

\newcommand{\m}{^{-1}}
\newcommand{\argomento}{\operatorname{--}}

\usepackage{mathrsfs}
\newcommand{\ms}[1]{\mathscr{#1}}
\newcommand{\mc}[1]{\mathcal{#1}}
\newcommand{\mf}[1]{\mathfrak{#1}}
\newcommand{\mb}[1]{\mathbf{#1}}

\newcommand{\abs}[1]{\lvert#1\rvert}
\newcommand{\norm}[1]{\lVert#1\rVert}
\newcommand{\newword}[1]{\emph{#1}}
\newcommand{\angles}[1]{\langle #1 \rangle}
\newcommand{\floor}[1]{\lfloor #1 \rfloor}
\newcommand{\set}[1]{\{ #1 \}}
\newcommand{\cppvector}[2]{\bigl(\begin{smallmatrix}#1\\#2\end{smallmatrix}\bigr)}
\newcommand{\rppvector}[2]{(#1\;\;#2)}
\newcommand{\ppmatrix}[4]{\bigl(\begin{smallmatrix}#1&#2\\#3&#4\end{smallmatrix}\bigr)}

\DeclareMathSymbol{\upharpoonright}{\mathrel}{AMSa}{"16}
\let\restriction\upharpoonright

\DeclareMathOperator{\GL}{GL}

\DeclareMathOperator{\PP}{P}
\DeclareMathOperator{\PSL}{PSL}
\DeclareMathOperator{\SL}{SL}
\DeclareMathOperator{\SO}{SO}
\DeclareMathOperator{\Mat}{Mat}

\theoremstyle{plain}
\newtheorem{theorem}{Theorem}[section]
\newtheorem{lemma}[theorem]{Lemma}

\newtheorem{proposition}[theorem]{Proposition}

\theoremstyle{definition}
\newtheorem{definition}[theorem]{Definition}
\newtheorem{remark}[theorem]{Remark}
\newtheorem{example}[theorem]{Example}

\numberwithin{equation}{section}

\DeclareMathOperator{\Leb}{Leb}
\DeclareMathOperator{\Aff}{Aff}
\DeclareMathOperator{\length}{length}
\newcommand{\vertiii}[1]{{\left\vert\kern-0.25ex\left\vert\kern-0.25ex\left\vert #1 
    \right\vert\kern-0.25ex\right\vert\kern-0.25ex\right\vert}}

\begin{document}

\bibliographystyle{plain}

\sloppy

\title[Dual continued fractions]{Attractors of dual continued fractions}

\author[G.~Panti]{Giovanni Panti}
\address{Department of Mathematics, Computer Science and Physics\\
University of Udine\\
via delle Scienze 206\\
33100 Udine, Italy}
\email{giovanni.panti@uniud.it}

\begin{abstract}
Given a Farey-type map $F$ with full branches in the extended Hecke group $\Gamma_m^\pm$, its dual $F_{\sharp}$ results from constructing the natural extension of $F$, letting time go backwards, and projecting. Although numerical simulations may suggest otherwise, we show that the domain of $F_{\sharp}$ is always tame, that is, it always contains intervals. As a main technical tool we construct, for every $m=3,4,5,\ldots$, a homeomorphism $M_m$ that simultaneously linearizes all maps with branches in $\Gamma_m^\pm$, and show that the resulting dual linearized iterated function system satisfies the strong open set condition. We explicitly compute the
H\"older exponent of every $M_m$, generalizing Salem's results for the  Minkowski question mark function $M_3$.
\end{abstract}

\thanks{\emph{2020 Math.~Subj.~Class.}: 11A55, 37F32}
\thanks{The author is partially supported by the 
MIUR Grant E83C18000100006 \emph{Regular and stochastic behaviour in dynamical systems}.}

\maketitle

\section{Introduction}\label{ref1}

Continued fraction maps with branches in extended Hecke groups (``extended'' referring to making allowance for matrices of determinant $-1$) have a long history. The original and still widely used framework is that of Rosen fractions and their variants, which have been pivotal in contexts such as the determination of the set of cusps of Hecke groups, the study of the Markov spectrum, and the transfer operator approach to Selberg's zeta function; see ~\cite{rosen54}, \cite{Schmidtsheingorn95}, \cite{mayer_et_al12} and references therein.
Rosen fractions have infinitely many branches and strong ergodic properties, admitting finite invariant absolutely continuous measures. As a drawback, 
as Markov maps on an interval
they do not have full branches; this makes the analysis difficult, and a whole theory of \emph{matching} has been developed for dealing with the issue~\cite{bruin_et_al19}, \cite{kalle_et_al20}.

In this paper we are interested in c.f.~maps of the \emph{slow} type ---whose prototype is the Farey map--- which have finitely many branches, all of them full. In this setting parabolic cycles, absent in the Rosen fractions, are inevitable, and the ergodic properties are weaker, as only infinite invariant absolutely continuous measures are admissible. Nonetheless
slow maps over Hecke groups have been proved valuable in various settings, for example providing renormalization procedures for flows on translation surfaces,
and a transfer operator approach to Maass cusp forms~\cite{smillieulcigrai10},
\cite{pohl14}, \cite{panti20a}.

However the branches may be, the construction of the natural extension of a c.f. map $F$ remains an essential nontrivial step. Except in the simplest cases, it begins with computer simulation. One picks a point $(\omega,\alpha)$ and iterates $(\omega,\alpha)\mapsto(F(\omega),F_\omega\m(\alpha))$, where $F_\omega\m$ is the local inverse of $F$ at $\omega$. If the initial choice is ``sufficiently generic'', then the orbit distributes along the domain of a geometric model for the natural extension $F_e$ of $F$; one can then formulate conjectures about the shape of the domain, and possibly prove them. However, even for extremely simple maps (see, e.g., Figure~\ref{fig7}),
quite often the distribution of the orbit appears fractal, and one is lead to suspecting the presence of strange attractors, namely Cantor sets of positive Lebesgue measure.

As a main result of this paper we prove that, in the case of slow maps, these suspicions are unjustified: the domain of $F_e$ is always of the form
$I\times K$, where $I$ is the interval domain of $F$ and $K$ ---the attractor of the iterated function system determined by the inverse branches of $F$--- always contains a dense set of intervals. As a matter of fact, we prove that the above iterated function system satisfies the strong open set condition~\cite{schief94}, \cite{peres_et_al01}, \cite{lauraoye01}, and this yields
the existence ---and often the explicit determination---
of the dual map of $F$. The latter is the map that results by running the natural extension in reverse time, and projecting in the second coordinate; note that maps with nonfull branches may not have duals. 
Duals of slow maps have indeed occasionally popped up in the literature, often under a rather speedy treatment: ``take the transposed matrices''.

Our paper is structured as follows: 
in~\S\ref{ref2} we introduce the extended Hecke groups $\Gamma_m^\pm$, for $m=3,4,5,\ldots$, and define embeddings of a certain ``double tree'' monoid in $\Gamma_m^\pm$ and in the affine group of $\Rbb$ with coefficients in $\Zbb[1/(m-1)]$. In~\S\ref{ref5} we describe how finite subtrees of this double tree induce slow c.f.~maps, and state our main Theorem~\ref{ref8}; we prove all of this theorem under the assumption of its first key statement, Theorem~\ref{ref8}(1). In~\S\ref{ref10} we provide examples of dual maps, some of them of independent interest, and settle a question in~\cite{arnouxschmidt19} about the connectedness of the domain of the natural extension in the orientation-preserving case. In~\S\ref{ref11} we develop our main technical tool, namely the construction of a family of homeomorphisms $M_m:[0,(2\cos(\pi/m))\m]\to[0,1]$ that topologically conjugate the Hecke group action of the above double tree with its affine action. In particular $M_m$ ---which for $m=3$ is the classical Minkowski question mark function--- linearizes every slow map with branches in $\Gamma_m^\pm$. We explicitly determine ---using an extremal norm technique of independent interest--- the joint spectral radius of the family of matrices determining the ``easiest'' map with $\Gamma_m$ branches, and use it to compute the H\"older exponent of $M_m$; this generalizes Salem's determination of the exponent of $M_3$~\cite{salem43}. 
In the final section~\S\ref{ref12} we use the results in~\S\ref{ref11} and the divergence of the Poincar\'e series of $\Gamma_m^\pm$ along the monoid generated by the inverse branches of a slow map to complete the proof of Theorem~\ref{ref8}(1).

\section{Notation and preliminaries}\label{ref2}

Throughout this paper $m$ is an integer greater than or equal to $3$, and $\lambda=\lambda_m=2\cos(\pi/m)$. The \newword{extended Hecke group} $\Gamma^\pm$ is the subgroup of $\PSL^\pm_2\Rbb$ (the group of $2\times2$ matrices with real entries and determinant $\pm1$, up to sign change) generated by
\begin{equation}\label{eq1}
L=\begin{pmatrix}
1 & \\
\lambda & 1
\end{pmatrix},\qquad
S=\begin{pmatrix}
 & -1\\
1 & 
\end{pmatrix},\qquad
F=\begin{pmatrix}
 & 1\\
1 & 
\end{pmatrix}.
\end{equation}
Here and below blank spaces in matrices substitute zero entries.
Besides $\lambda,\Gamma^\pm,L$ defined above, in this paper we will introduce several objects depending on $m$, such as the monoid $\Sigma$ of Definition~\ref{ref20}, the measure $\eta$ of Theorem~\ref{ref8}, the Minkowski function~$M$ of~\S\ref{ref11}. In order to avoid burdening of notation we will drop the $m$ subscript whenever possible, occasionally reverting to an explicit $\lambda_5$, $M_3$, $\ldots$ when necessary for clarity.

We identify elements of $\PSL^\pm\Rbb$ with the maps they induce
on the upper-half plane $\mc{H}$ and its boundary $\PP^1\Rbb$: if $A=\ppmatrix{a}{b}{c}{d}$
(we are using a matrix to represent its projective class),
then $A(z)$ equals $(az+b)/(cz+d)$ if $A$ has positive determinant, and $(a\bar{z}+b)/(c\bar{z}+d)$
otherwise. We refer to~\cite{beardon95}, \cite{katok92} for basics of hyperbolic geometry.

The triangle in $\mc{H}$ of vertices $\zeta=\exp\bigl((1-1/m)\pi i\bigr),i,0$ is a fundamental domain for $\Gamma^\pm$, which is a copy of the abstract \newword{extended triangle group} $\Delta^\pm(2,m,\infty)$. The subgroup $\Gamma<\Gamma^\pm$ of matrices of determinant~$1$ is the \newword{Hecke group}, and has as fundamental domain the union of the above triangle with its $F$-image. For $j=1,2,\ldots,m-1$, we further introduce the matrices
\begin{equation}\label{eq2}
R=L\m S=\begin{pmatrix}
 & -1\\
 1 & \lambda
 \end{pmatrix},\qquad
A_j=SR^{-j},\qquad
A_f=F.
\end{equation}
The picture in Figure~\ref{fig1}, which is drawn for $m=7$, clarifies the situation. The matrix $R$ induces a clockwise rotation by $2\pi/m$ around~$\zeta$, while $S$ induces a rotation by $\pi$ around~$i$; note that although we compute in the upper-plane model, we draw pictures in the disk model. The extended Hecke group and the ordinary one are generated by
$S,R,F$ and $S,R$, respectively; as a matter of fact, $\Gamma$ is the free product of cyclic groups $\angles{S}*\angles{R}\simeq Z_2*Z_m$, while $\Gamma^\pm$ is the amalgamated product of dihedral groups $\angles{S,F}*_{\angles{F}}\angles{R,F}\simeq D_2*_{Z_2}D_m$ \cite[Theorem~2.1]{sahin_et_al06}.
\begin{figure}[h!]
\begin{tikzpicture}[scale=2.4]
\coordinate (z) at (-0.628341645367214,0);
\coordinate (m0) at (-0.589766908140269, -0.019510958298595112);
\coordinate (v0) at (0,-1);
\coordinate (t0) at (0,0);
\coordinate (m1) at (-0.589766908140269, 0.019510958298595088);
\coordinate (v1) at (0,1);
\coordinate (t1) at (-0.497293447571041, 0.4480459144733718);
\coordinate (m2) at (-0.6207560323954322, 0.04122881076904498);
\coordinate (v2) at (-0.8485737648598856, 0.5290770885150096);
\coordinate (t2) at (-0.804635184302833, 0.2784124121832999);
\coordinate (m3) at (-0.6545835002281661, 0.030776499177525736);
\coordinate (v3) at (-0.9761252742710891, 0.21720830768916605);
\coordinate (t3) at (-0.8922187314756402, 0.0883574310566999);
\coordinate (m4) at (-0.6681786379192989, 0);
\coordinate (v4) at (-1, 0);
\coordinate (t4) at (-0.8922187314756398, -0.08835743105670055);
\coordinate (m5) at (-0.6545835002281657, -0.030776499177525698);
\coordinate (v5) at (-0.976125274271089, -0.2172083076891669);
\coordinate (t5) at (-0.8046351843028328, -0.2784124121833005);
\coordinate (m6) at (-0.6207560323954323, -0.04122881076904487);
\coordinate (v6) at (-0.8485737648598849, -0.5290770885150109);
\coordinate (t6) at (-0.4972934475710395, -0.4480459144733719);

\draw (0,0) circle [radius=1cm];
\draw[line width=0.4pt] (v0) to (v1);
\draw[line width=0.4pt] (v1) to[arc through cw=(t1)] (v2);
\draw[line width=0.4pt] (v2) to[arc through cw=(t2)] (v3);
\draw[line width=0.4pt] (v3) to[arc through cw=(t3)] (v4);
\draw[line width=0.4pt] (v4) to[arc through cw=(t4)] (v5);
\draw[line width=0.4pt] (v5) to[arc through cw=(t5)] (v6);
\draw[line width=0.4pt] (v6) to[arc through cw=(t6)] (v0);

\draw[line width=0.4pt] (z) to[arc through cw=(m0)] (v0);
\draw[line width=0.4pt] (z) to[arc through ccw=(m1)] (v1);
\draw[line width=0.4pt] (z) to[arc through ccw=(m2)] (v2);
\draw[line width=0.4pt] (z) to[arc through ccw=(m3)] (v3);
\draw[line width=0.4pt] (z) to (v4);
\draw[line width=0.4pt] (z) to[arc through cw=(m5)] (v5);
\draw[line width=0.4pt] (z) to[arc through cw=(m6)] (v6);

\node[below] at (v0) {$0$};
\node[above] at (v1) {$\infty$};
\node[above left] at (v2) {$-\lambda$};
\node[left] at (v4) {$-1$};
\node[below left] at (v6) {$-\lambda\m$};
\node[right] at (t0) {$i$};
\node[below right] at (-0.64,-0.06) {$\zeta$};

\draw[dashed] (z) to (t0);
\draw[dashed] (t0) to (t2);
\draw[dashed] (t2) to[arc through cw=(v6)] (z);

\end{tikzpicture}
\caption{Fundamental domain for $\Gamma_7$ and its $R$-orbit (the dashed triangle refers to the proof of Theorem~\ref{ref21})}
\label{fig1}
\end{figure}

The action of $\PSL^\pm_2\Rbb$ extends naturally to the ideal boundary $\partial\mc{H}\simeq\PP^1\Rbb$ of $\mc{H}$. We write intervals in $\partial\mc{H}$ using the ``counterclockwise'' order induced by the disk model; thus $[0,-1]$ includes $\infty$ and excludes $-1/2$, while $[-1,0]$ is the closure of $\partial\mc{H}\setminus[0,-1]$. 
For $j\in\set{1,\ldots,m-1}$, the action of $A_j$ on the interval $[0,\infty]$ amounts to first mapping it inside $[\infty,0]$ by $R^{-j}$ (thus obtaining the $j$th, starting from $\infty$, of the intervals displayed to the left in Figure~\ref{fig1}), and then bringing it back inside $[0,\infty]$ by $S$. It is apparent that the union of the various $I_j=A_j[0,\infty]$ is all of $[0,\infty]$ and that, for $j\not=h$, $I_j$ and $I_h$ intersect (in a common endpoint) if and only if $j$ and $h$ are consecutive.

\begin{definition}
We\label{ref20} let $\Sigma$ be the monoid generated by the alphabet $\set{1,\ldots,m-1,f}$ modulo the relations $fj=(m-j)f$; also,
given any alphabet $\mc{A}$, we denote by $\mc{A}^*$ the free monoid of words over $\mc{A}$.
Clearly $\set{1,\ldots,m-1}^*$ is a submonoid of~$\Sigma$, and can be seen as the set of nodes of the full $(m-1)$-ary tree. We denote these nodes by bold letters $\bm{j}=j_0\cdots j_{l-1}$; thus $l$ is the \newword{level} of $\mb{j}$ in the tree, and
$\bm{h}$ is a descendant of $\bm{j}$ if and only if $\bm{j}$ is an initial segment of $\bm{h}$.
Since every element $\bm{s}$ of $\Sigma$ can be uniquely written either as $\bm{s}=\bm{j}$ or as $\bm{s}=\bm{j}f$, for some $\bm{j}\in\set{1,\ldots,m-1}^*$, we see that $\Sigma$ can be seen as the disjoint union of two copies of the full $(m-1)$-ary tree. 
We regard $A_{\argomento}$, as defined by~\eqref{eq2}, as a map from $\set{1,\ldots,m-1,f}$ to $\Gamma^\pm$, and define three more maps $B_{\argomento}$, $C_{\argomento}$, $\sharp$, all of domain $\set{1,\ldots,m-1,f}$, as follows:
\begin{gather*}
B_s = L A_s L\m,\\
C_j=\begin{pmatrix}
1/(m-1) & (j-1)/(m-1)\\
 & 1
 \end{pmatrix},\qquad
C_f=\begin{pmatrix}
-1 & 1\\
 & 1
 \end{pmatrix},\\
\sharp j=m-j,\qquad \sharp f=f.
\end{gather*}
\end{definition}

\begin{lemma}
The\label{ref4} maps $A_{\argomento}$ and $B_{\argomento}$ extend to 
isomorphic embeddings of $\Sigma$ into~$\Gamma^\pm$.
The map $C_{\argomento}$ extends to an isomorphic embedding of $\Sigma$
into the affine group $\Aff\Rbb$ of $\Rbb$, such that all maps in the range have coefficients
in the ring $\Zbb[1/(m-1)]$. The map $\sharp$ extends to an antiisomorphic involution of $\Sigma$ onto itself. We have $A_{\sharp\mb{s}}=A_{\mb{s}}^T$, the transpose of $A_{\mb{s}}$. 
\end{lemma}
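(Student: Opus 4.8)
The plan is to settle the four assertions in turn, the only one calling for a real idea being the injectivity of $A_{\argomento}$. Throughout I would rely on the universal property of the presentation of $\Sigma$: a map of the alphabet $\set{1,\ldots,m-1,f}$ into a monoid extends to a homomorphism (resp.\ an antihomomorphism) exactly when it sends the two sides of every defining relation to equal elements (resp.\ to elements that coincide after reversing each side word by word). For $\sharp$ this is immediate: reversing words turns $fj=(m-j)f$ into $\sharp(j)\sharp(f)=\sharp(f)\sharp(m-j)$, that is $(m-j)f=fj$, which is the relation itself; hence $\sharp$ extends to an antihomomorphism $\Sigma\to\Sigma$, and as $\sharp\circ\sharp$ is a homomorphism fixing each generator it must equal $\mathrm{id}_\Sigma$, so $\sharp$ is the asserted bijective antihomomorphic involution.

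Next I would verify that $A_{\argomento}$, $B_{\argomento}$, $C_{\argomento}$ respect the relations. For $C_{\argomento}$ this is one line with affine maps: $C_f\circ C_j$ and $C_{m-j}\circ C_f$ both act as $x\mapsto(m-j-x)/(m-1)$ (and $C_f\circ C_f=\mathrm{id}$), and the entries of any product of the displayed generators plainly stay in $\Zbb[1/(m-1)]$, so the image lies in the affine group over that ring. For $A_{\argomento}$ the relation $A_fA_j=A_{m-j}A_f$ is equivalent, since $F^2=1$, to $FA_jF=A_{m-j}$; from~\eqref{eq1}--\eqref{eq2} one gets the elementary identities $FSF=-S$ and $FRF=R\m$, so $FA_jF=F\,SR^{-j}\,F=(FSF)(FRF)^{-j}=-SR^{j}=-A_{m-j}$ (using $R^m=1$), which in $\PSL^\pm$ is $A_{m-j}$. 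Finally $B_{\argomento}$ is $A_{\argomento}$ followed by the inner automorphism $g\mapsto LgL\m$, hence again a homomorphism.

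The substantial point is injectivity, and here I would invoke that the Hecke group is the free product $\Gamma=\angles{S}*\angles{R}\simeq Z_2*Z_m$. For a tree node $\mb{j}=j_0\cdots j_{l-1}\in\set{1,\ldots,m-1}^*$, each factor $A_{j_i}=SR^{-j_i}$ couples the nontrivial involution $S$ with the nontrivial rotation $R^{-j_i}=R^{m-j_i}$ ($1\le m-j_i\le m-1$), so $A_{\mb{j}}=SR^{-j_0}SR^{-j_1}\cdots SR^{-j_{l-1}}$ is a \emph{reduced} word of syllable length $2l$ in the free product; two reduced words agree only when syllable by syllable identical, so $A_{\mb{j}}=A_{\mb{k}}$ forces $\mb{j}=\mb{k}$ (and $A_{\mb{j}}=1$ only for the empty word). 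To pass from this ``number tree'' to all of $\Sigma$ I would use the normal form of Definition~\ref{ref20}, namely that every $\mb{s}$ is uniquely $\mb{j}$ or $\mb{j}f$: since $\det A_j=1$ and $\det A_f=-1$, the two trees land respectively in the determinant-$+1$ and determinant-$-1$ parts of $\Gamma^\pm$ and so cannot collide, while inside a single tree $A_{\mb{j}}F=A_{\mb{k}}F$ cancels down to the case already done. Hence $A_{\argomento}$, and with it the conjugate $B_{\argomento}$, is injective. For $C_{\argomento}$ the corresponding argument is entirely self-contained: $C_{\mb{j}}$ is the affine map $x\mapsto(m-1)^{-l}x+t_{\mb{j}}$ with $t_{\mb{j}}=\sum_i(j_i-1)(m-1)^{-(i+1)}$, so its contraction ratio records the level $l$ and its translation records $\mb{j}$, and postcomposing with $C_f$ merely flips the sign of the ratio, separating the two trees; thus $C_{\argomento}$ is injective as well. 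This free-product normal form is, I expect, the only genuinely non-formal ingredient of the lemma.

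Finally, for $A_{\sharp\mb{s}}=A_{\mb{s}}^T$ I would note that both $\mb{s}\mapsto A_{\sharp\mb{s}}$ and $\mb{s}\mapsto A_{\mb{s}}^T$ are antihomomorphisms $\Sigma\to\Gamma^\pm$ (the first since $\sharp$ is antihomomorphic and $A_{\argomento}$ homomorphic, the second since transposition reverses products), so it suffices to compare them on the generators. On $f$ this reads $F=F^T$, true as $F$ is symmetric. On $j$ one needs $A_j^T=A_{m-j}$ in $\PSL^\pm$; the crux is the matrix identity $R^T=SR\m S\m$, valid because $SR\m=L$ and $LS\m=R^T$, both immediate from~\eqref{eq1}--\eqref{eq2}, which together with $S^T=S\m$ and the triviality of $S^2$ in $\PSL^\pm$ yields $A_j^T=(SR^{-j})^T=(R^{-j})^TS^T=(SR^{j}S\m)S\m=SR^{j}=A_{m-j}$, using $R^m=1$. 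Apart from this handful of $2\times2$ identities and the routine bookkeeping of the $\PSL^\pm$ sign ambiguity, the only real content of the lemma is the injectivity step above.
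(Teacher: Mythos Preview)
Your proof is correct and follows the same overall scaffolding as the paper: verify the defining relation on generators, then argue injectivity, then compare the two antihomomorphisms $\mb{s}\mapsto A_{\sharp\mb{s}}$ and $\mb{s}\mapsto A_{\mb{s}}^T$ on generators. The one genuine divergence is how you establish that $\mb{j}\mapsto A_{\mb{j}}$ is injective on $\set{1,\ldots,m-1}^*$. The paper argues dynamically, observing that the intervals $I_j=A_j[0,\infty]$ meet only in endpoints and invoking the Ping-Pong Lemma directly to conclude the monoid $\angles{A_1,\ldots,A_{m-1}}$ is free; it then uses the same interval pictures (and $A_f[0,\infty]=[0,\infty]$) to separate the two trees. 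You instead read off injectivity algebraically from the free-product normal form in $\Gamma=\angles{S}*\angles{R}$, which the paper has already recorded, and separate the two trees by determinant. Both routes are short and essentially equivalent in content: the Ping-Pong argument is precisely how one proves the free-product decomposition in the first place, so you are citing the conclusion where the paper repeats the mechanism. Your determinant trick is a little cleaner than the paper's interval argument for ruling out $A_{\mb{j}}=A_{\mb{k}f}$; conversely the paper's approach has the advantage of working verbatim for $C_{\argomento}$ (same interval picture on $[0,1]$), whereas you switch to a separate explicit computation of the affine parameters.
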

\begin{proof}
Consider first $A_{\argomento}$; the above description of the intervals $I_1,\ldots,I_{m-1}$, together with the Ping-Pong Lemma~\cite[VII.A.2]{delaharpe00}, shows immediately that the monoid generated by $A_1,\ldots,A_{m-1}$ is free. Since the identity
$A_f A_j=A_f SR^{-j}=SA_f R^{-j}A_f A_f=SR^jA_f=A_{m-j}A_f$ holds, $A_{\argomento}$ extends naturally to a monoid homomorphism into $\Gamma^\pm$. 
As $A_f$ fixes $[0,\infty]$ we have $A_{\bm{j}f}[0,\infty]=A_{\bm{j}}[0,\infty]$, which readily implies that $A_{\argomento}$ is injective. This proves the statement about $A_{\argomento}$, and the one about $B_{\argomento}$ follows, since $B_{\argomento}$ is the composition of $A_{\argomento}$ with a conjugation.

Concerning $C_{\argomento}$, we are canonically identifying the affine group of $\Rbb$ with the subgroup of $\GL_2\Rbb$ whose elements have second row $\rppvector{0}{1}$. In that subgroup the identity $C_f C_j=C_{m-j} C_f$ holds, 
and thus $C_{\argomento}$ extends to a homeomorphism. The images of $[0,1]$ under $C_1,\ldots,C_{m-1}$ are the $m-1$ subintervals
of~$[0,1]$ of endpoints $k/(m-1)$, for $0\le k\le m-1$, and the previous argument for the injectivity of $A_{\argomento}$ applies here as well.

Finally, we extend $\sharp$ by setting $\sharp(s_0\cdots s_{l-1})=(\sharp s_{l-1})\cdots (\sharp s_0)$ for $s_i\in\set{1,\ldots,m-1,f}$. This definition respects the relation $fj=(m-j)f$, and thus defines an antiisomorphic involution of $\Sigma$. Both maps $A_{\bm{s}}\to A_{\sharp\bm{s}}$ and $A_{\bm{s}}\to A_{\bm{s}}^T$ are then antiisomorphic involutions on the range $A_\Sigma$ of $A_{\argomento}$, and it is readily seen that they agree on the generators $A_1,\ldots,A_{m-1},A_f$; thus they agree everywhere.
\end{proof}

\begin{remark}
The\label{ref18} isomorphism $C_{\argomento}\circ (B_{\argomento})\m$ from the submonoid $B_\Sigma$ of $\Gamma^\pm$ to the submonoid $C_\Sigma$ of $\Aff\Rbb$ does not extend to the enveloping groups:
$B_1\m B_2=L R\m L\m$ has order~$m$, but $C_1\m C_2=\ppmatrix{1}{1}{}{1}$ has infinite order. In~\S\ref{ref11} we will introduce a family of Minkowski functions ---one for each $m$--- that will upgrade this \emph{algebraic} isomorphism of monoids into a \emph{topological} conjugacy of actions.
\end{remark}

\section{Slow continued fractions}\label{ref5}

We recall that an \newword{iterated function system} (IFS for short) is a finite set $\ms{D}=\set{D_1,\ldots,D_q}$ of weakly contractive (i.e., $d(D_ix,D_iy)\le d(x,y)$ for every $i$) injective maps on a metric space $(X,d)$. The associated \newword{Hutchinson operator} is then the map $\Phi_{\ms{D}}$ on the power set of $X$ defined by
\[
\Phi_{\ms{D}}(W)=D_1[W]\cup\cdots\cup D_q[W];
\]
see~\cite{hutchinson81}, \cite{falconer14} for basics about IFSs.

\begin{definition}
A\label{ref6} \newword{decorated tree} is a finite subtree, not reduced to the root only, of the full $(m-1)$ary tree $\set{1,\ldots,m-1}^*$ such that any node which is not a leaf has precisely $m-1$ children and, moreover, some ---possibly none--- of the leaves $\bm{j}$ have been replaced by $\bm{j}f$.
Let $\bm{s}_1,\ldots,\bm{s}_q\in\Sigma$ be the leaves of a decorated tree,
$\ms{B}=\set{B_{\bm{s}_1},\ldots,B_{\bm{s}_q}}$, and $U$ the open interval $(0,\lambda_m\m)$. Clearly $\Phi_{\ms{B}}(U)$ is the disjoint union of $q$ open intervals, none of them depending on the decoration, covering $U$ up to finitely many points.
The \newword{Farey-type map} determined by $\set{\bm{s}_1,\ldots,\bm{s}_q}$ is then the piecewise-projective map $F:U\to U$ defined by $B_{\mb{s}_i}\m$ on $B_{\mb{s}_i}[U]$, and undefined on $U\setminus\Phi_{\ms{B}}(U)$.
\end{definition}

Decorated trees should be thought of as fragments of the Cayley graph of $\Gamma^\pm$, a point of view that will be relevant in~\S\ref{ref12}.

Letting $U'$ be the $G_\delta$ set $\bigcap_{n\ge1}\Phi_{\ms{B}}^n(U)$, it is easily established that $U'$ has full Lebesgue measure inside
$[0,\lambda\m]$, that all iterates of $F$ are defined on $U'$, and that $F\restriction U'$ is a Markov map with full branches.
Given $x\in U'$, the infinite sequence of indices of intervals
$B_{\bm{s}_1}[U],\ldots,B_{\bm{s}_q}[U]$ into which $x$ lands under iteration of $F$ can be seen as a continued fraction expansion of $x$; see \cite[Chapter~7]{CornfeldFomSi82}, \cite[Chapter~3]{einsiedlerward} and references therein for this interpretation of continued fractions in terms of dynamical systems. We feel then justified in looking at Farey-type maps as continued fraction algorithms; due to the presence of parabolic cycles these algorithms are called \newword{slow} or \newword{additive}.
Usually such maps are extended to all of $[0,\lambda\m]$ via some convention on the behaviour at endpoints; see Remark~\ref{ref13}(1).
Also, it is sometimes preferable to have maps defined on $[0,\infty]$; this is achieved by conjugating $F$ by $L\m$, which simply amounts to replacing every~$B$ matrix with the corresponding $A$ matrix.

\begin{example}
Of course $\Gamma_3^\pm=\PSL_2^\pm\Zbb$ is the extended modular group, and $\Gamma_3$ is the modular group; the Farey-type maps of Definition~\ref{ref6} are then precisely the slow continued fraction algorithms in~\cite[Definition~2.1]{panti18}. Consider the decorated trees in Figure~\ref{fig2}, in which black leaves correspond to undecorated nodes $\mb{j}$ and white leaves to decorated ones $\mb{j}f$.
\begin{figure}
\begin{tikzpicture}[level distance=2em, sibling distance=3em]
\coordinate
	child {[fill] circle (2pt)}
	child {circle (2pt)};
\end{tikzpicture}\qquad
\begin{tikzpicture}[level distance=2em, sibling distance=3em]
\coordinate
child {
	child {[fill] circle (2pt)}
	child {circle (2pt)}
	}
child {[fill] circle (2pt)};
\end{tikzpicture}\qquad
\begin{tikzpicture}[level distance=2em, sibling distance=3em]
\coordinate
child {
	child {[fill] circle (2pt)}
	child {[fill] circle (2pt)}
	}
child {[fill] circle (2pt)};
\end{tikzpicture}\qquad
\begin{tikzpicture}[level distance=2em, sibling distance=2.5em]
\coordinate
child {[fill] circle (2pt)}
child {[sibling distance=1.5em]
	child {circle (2pt)}
	child {circle (2pt)}
	child {[fill] circle (2pt)}
	}
child {circle (2pt)};
\end{tikzpicture}
\caption{Three decorated trees for $m=3$, and one for $m=4$}
\label{fig2}
\end{figure}
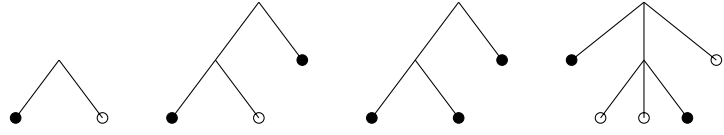
The first tree determines the classical Farey map, whose tent-like graph is well known~\cite[\S2]{bonanno_et_al13}. The maps determined by the other trees appear in Figure~\ref{fig3}, the first one being the Romik map~\cite{romik08}, also known as the even fractions map~\cite{bocalinden18}, \cite{cha_et_al18}; all pictures and computations in this paper are done using SageMath.
\begin{figure}[h!]
\includegraphics[width=3.5cm]{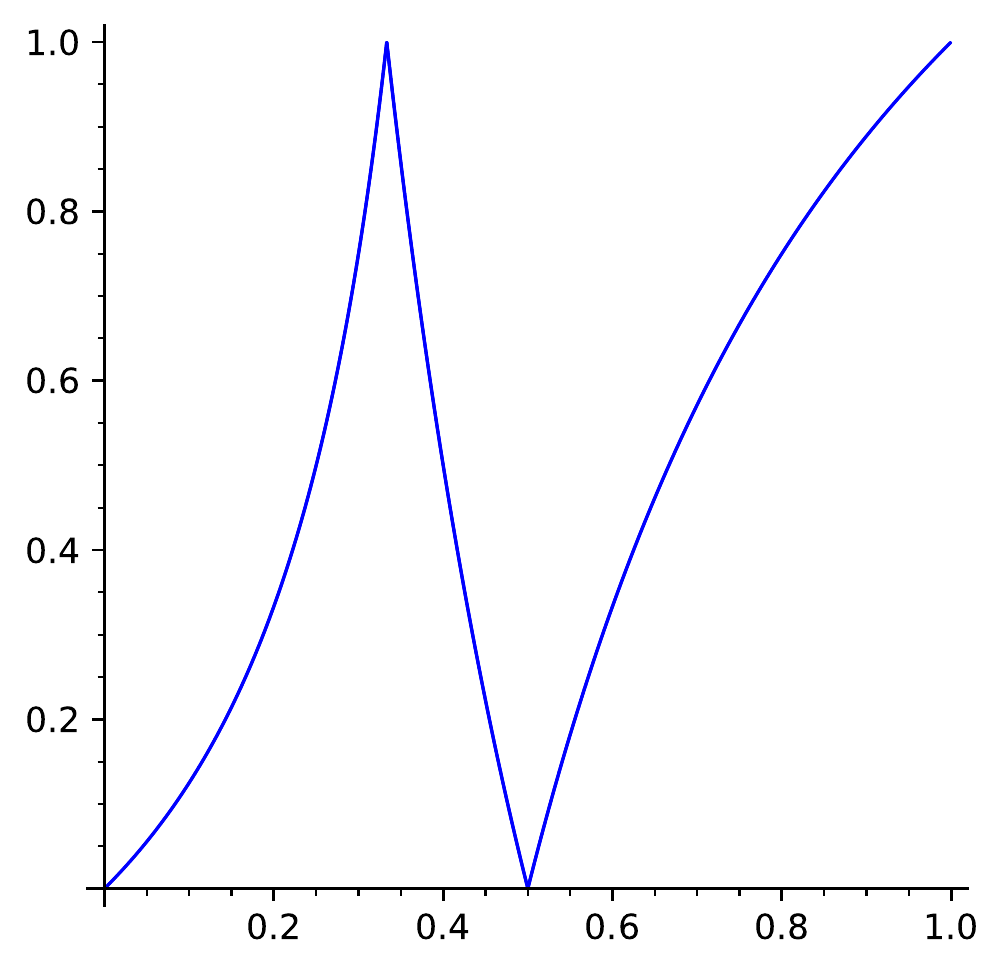}
\hspace{0.6cm}
\includegraphics[width=3.5cm]{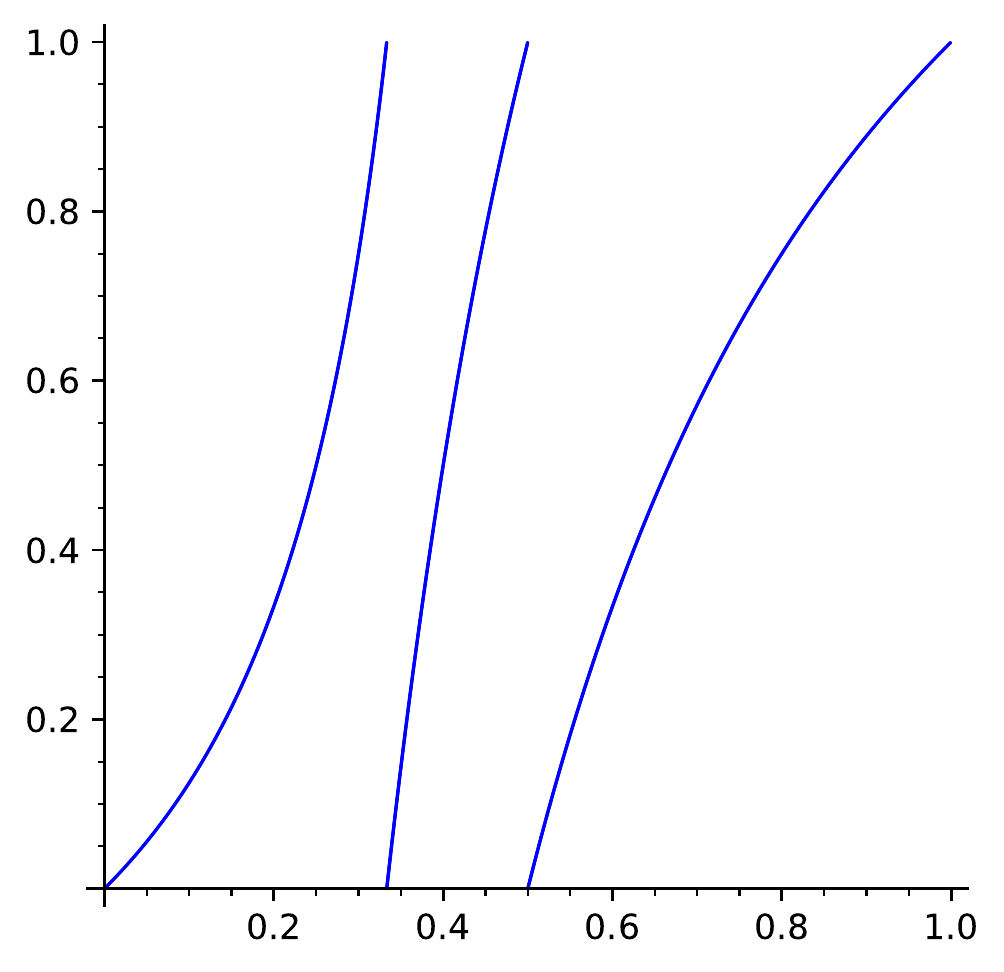}
\hspace{0.6cm}
\includegraphics[width=3.5cm]{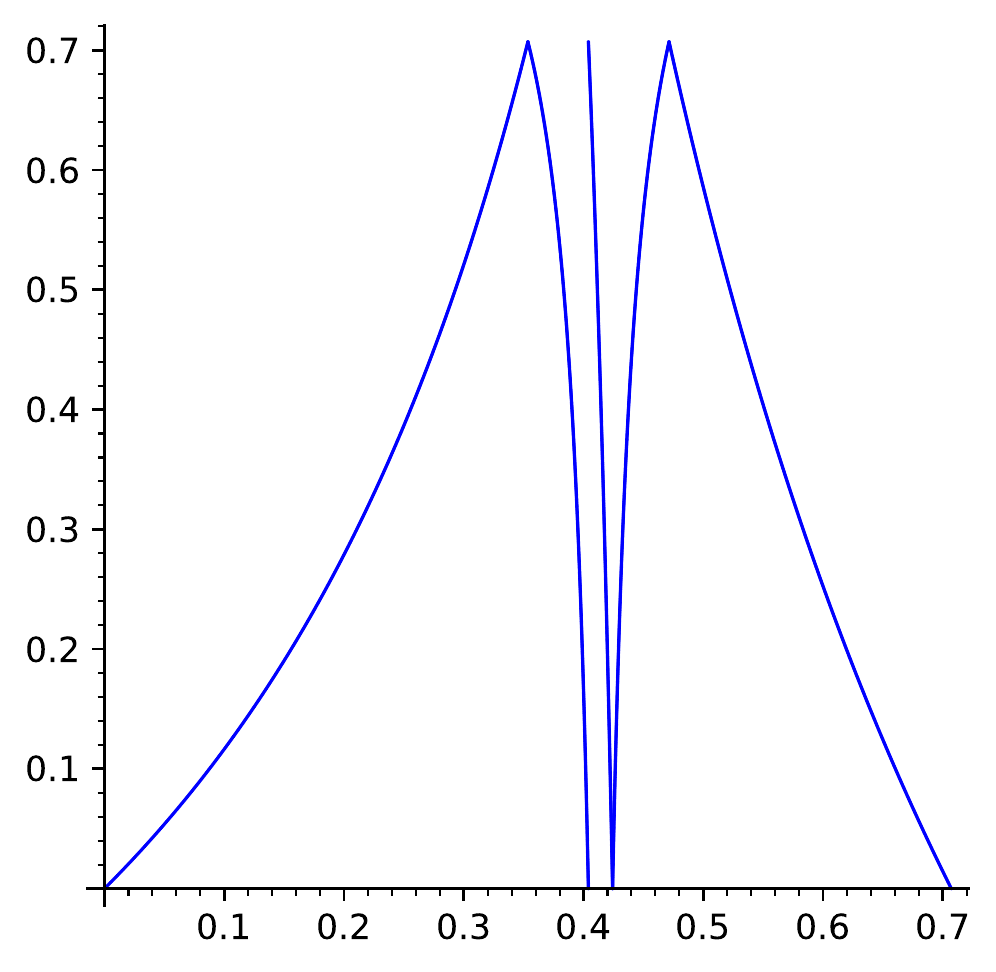}
\caption{The maps determined by the rightmost trees in Figure~\ref{fig2}}
\label{fig3}
\end{figure}
\end{example}

\begin{example}
In\label{ref7} Figure~\ref{fig4} left we plot the square of the Farey map; it is determined by the leaves $11,12f,21,22f$, with $m=3$. We plot to the right the map determined by $1,2f,3,4f$, for $m=5$, of domain $[0,\lambda_5\m]$, with $\lambda_5$ the golden ratio. These two maps are topologically conjugate, but their duals are not;
see Example~\ref{ref15}.
\begin{figure}[h!]
\includegraphics[width=4.5cm]{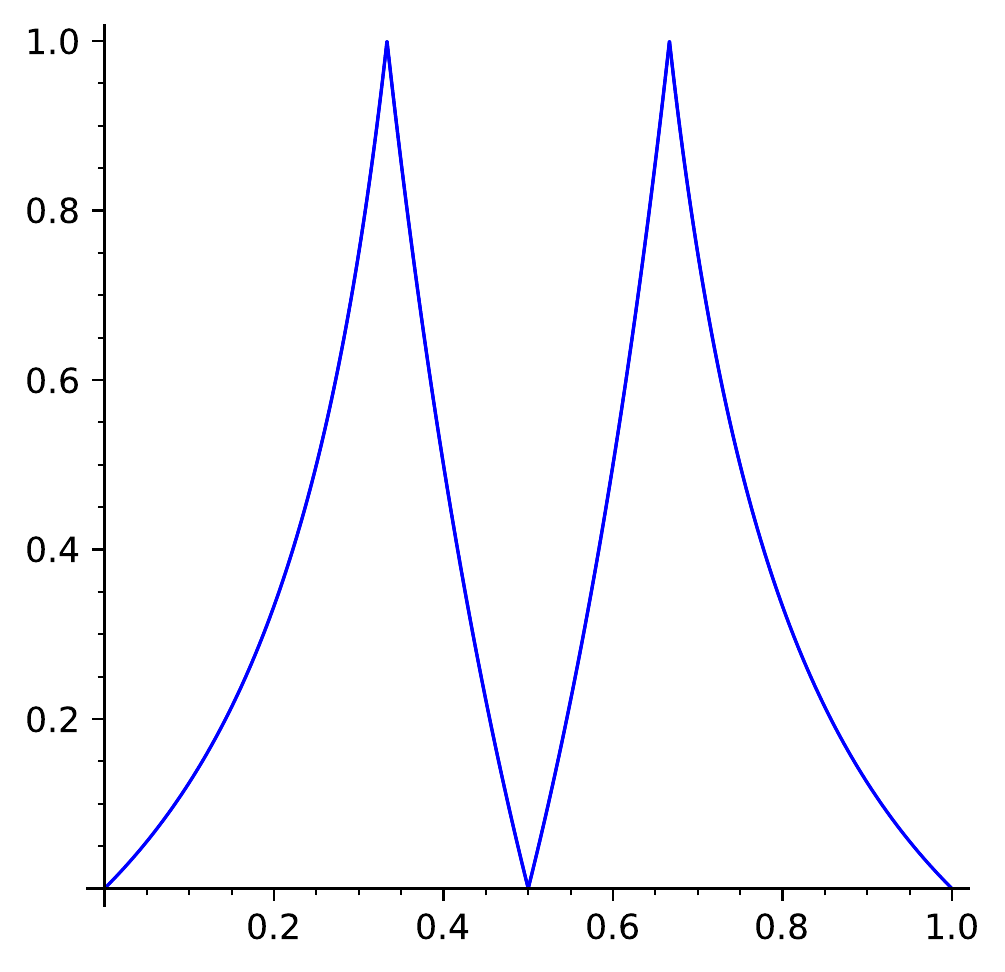}
\hspace{0.6cm}
\includegraphics[width=4.5cm]{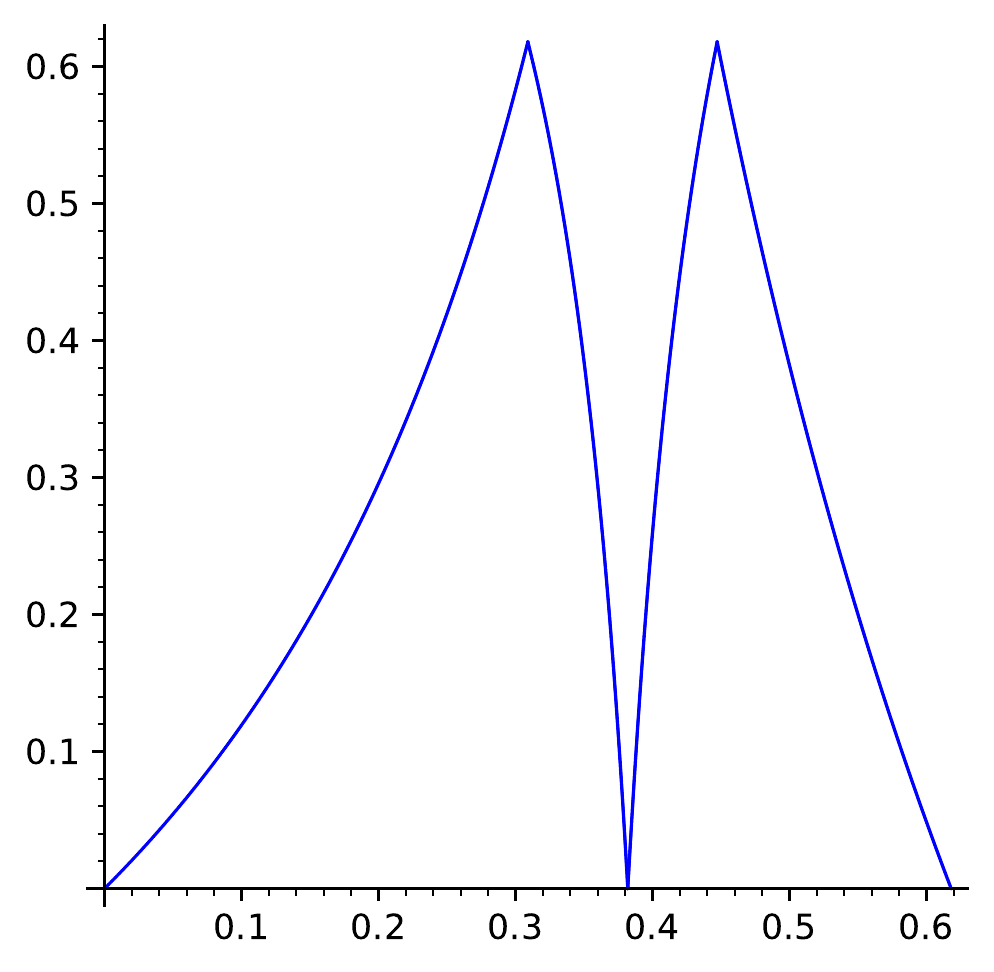}
\caption{Two topologically conjugate maps with non-topologically conjugate duals}
\label{fig4}
\end{figure}
\end{example}

We recall the basics of the theory of natural extensions. Given a measure-preserving dynamical system (a \newword{metric system} for short) $(X,\mu,F)$, with $\mu$ a Borel $F$-invariant measure, $\sigma$-finite but not necessarily finite, its \newword{natural extension} 
is an initial object $(X_e,\mu_e,F_e)$ in the category of invertible metric systems that have $(X,\mu,F)$ as a factor. In plain words, it is required that there exists a factor map $\varphi:X_e\to X$ intertwining $F_e$ with $F$, and such that every factor map from an invertible metric system to $(X,\mu,F)$ factors uniquely through~$\varphi$. Uniqueness of such an object follows from abstract nonsense, and its existence from mild requirements on the relevant categories (namely, closure under products and subsystems).

Given a Farey-type map $F$ as in Definition~\ref{ref6}, consider the IFS $\sharp\ms{B}=
\set{B_{\sharp\bm{s}_1},\ldots,B_{\sharp\bm{s}_q}}$, and let $K$ be its \newword{attractor}, namely the only closed nonempty subset of $[0,\lambda\m]$ that is a fixed point for $\Phi_{\sharp\ms{B}}$.
Equivalently, $K$ is the range of the continuous map $\pi_{\sharp\ms{B}}:\set{1,\ldots,q}^{\Zbb\p}\to[0,\lambda\m]$ implicitly defined by
\begin{equation}\label{eq6}
\bigcap_{n\ge0} B_{\sharp\bm{s}_{\omega(0)}}
B_{\sharp\bm{s}_{\omega(1)}}\cdots
B_{\sharp\bm{s}_{\omega(n-1)}}[0,\lambda\m]=\set{\pi_{\sharp\ms{B}}(\omega)},
\end{equation}
where $\omega=\omega(0)\omega(1)\cdots\in\set{1,\ldots,q}^{\Zbb\p}$.
Note that the intersection in~\eqref{eq6} is indeed a singleton, even though the IFS $\sharp\ms{B}$ is not strictly contracting; this follows from~\cite[Theorem~5.1]{panti20a}.

\begin{theorem}
Let\label{ref8} $U,U',F$ be as in Definition~\ref{ref6}, and $K$ as above.
\begin{enumerate}
\item The iterated function system $\sharp\ms{B}$ satisfies the strong open set condition. More precisely, there exists an open set $V\subset[0,\lambda\m]$ such that:
\begin{itemize}
\item[(1.1)] the closure $\overline{V}$ of $V$ equals $K$;
\item[(1.2)] $B_{\sharp\bm{s}_1}[V],\ldots,B_{\sharp\bm{s}_q}[V]$ are pairwise disjoint nonempty open subsets of $V$;
\item[(1.3)] $\Phi_{\sharp\ms{B}}(V)$ has full Lebesgue measure inside $V$.
\end{itemize}
\item Let $F_\sharp:V\to V$ be defined by $B_{\sharp\mb{s}_i}\m$ on $B_{\sharp\mb{s}_i}[V]$ and undefined otherwise; let also
$V'=\bigcap_{n\ge1}\Phi_{\sharp\ms{B}}^n(V)$. Then $V'$ has full Lebesgue measure inside $V$, the closure of $V'$ equals $K$, and
$F_\sharp \restriction V'$ is a Markov map with full branches.
\item Let $\eta$ be the infinite $\sigma$-finite measure determined by
\begin{equation}\label{eq3}
\ud\eta=\frac{\ud x\ud y}{\bigl(1-\lambda x-\lambda y+(\lambda^2+1)xy\bigr)^2}
\end{equation}
on $U\times V$. Then the two maps
\begin{equation}\label{eq4}
\begin{split}
F_e(x,y)&=\bigl(F(x),B_{\sharp\bm{s}_{i(x)}}(y)\bigr),\\
F_e\m(x,y)&=\bigl(B_{\bm{s}_{i(y)}}(x),F_\sharp(y)\bigr),
\end{split}
\end{equation}
(the indices $i(x)$ and $i(y)$ being defined by $x\in B_{\mb{s}_{i(x)}}[U]$ and
$y\in B_{\sharp\mb{s}_{i(y)}}[V]$)
from $U\times V$ to itself preserve $\eta$ and are inverse to each other up to $\eta$-nullsets.
\item The pushforward measure $\mu=(\pi_1)_*\eta$ is the unique $F$-invariant measure on $U$ which is absolutely continuous w.r.t.~Lebesgue measure. The system $(U\times V,\eta,F_e)$ is the natural extension of $(U,\mu,F)$.
\item The measure
\begin{equation}\label{eq5}
\ud\nu=\frac{\ud y}{(1-\lambda y)y}
\end{equation}
on $V$ is $F_\sharp$-invariant, and $(U\times V,\eta,F_e\m)$ is the natural extension of $(V,\nu,F_\sharp)$.
\item The above statements remain true for the version of $F$ over $[0,\infty]$, by simply replacing every $B$ matrix with the corresponding $A$ matrix, the right side of~\eqref{eq3} with the ``standard number-theoretic measure'' $(1-xy)^{-2}\ud x\ud y$ (see, e.g.,~\cite[p.~718]{arnouxschmidt13}), and the right hand side of~\eqref{eq5} with $y\m\ud y$.
\end{enumerate}
\end{theorem}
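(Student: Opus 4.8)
The plan is to prove statement (1) first — the other parts then follow by essentially formal arguments about natural extensions, which are sketched in the paper as consequences of (1) — and indeed the excerpt tells us that the whole theorem is reduced to Theorem~\ref{ref8}(1) in \S\ref{ref5}, with (1) itself being proved only in \S\ref{ref12}. So the real content is the construction of the open set $V$ satisfying (1.1)--(1.3), i.e.\ the strong open set condition (SOSC) for the dual IFS $\sharp\ms{B}$. The natural candidate for $V$ is the interior of the attractor $K$. Concretely, I would set $V=\operatorname{int}K$ and argue that (1.1) is then the assertion that $K$ has no ``isolated'' boundary structure, i.e.\ that $K=\overline{\operatorname{int}K}$. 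Granting this, (1.2) is almost automatic: the maps $B_{\sharp\bm{s}_i}$ are projective homeomorphisms onto subintervals of $[0,\lambda\m]$, they are injective on $K$ by the freeness part of Lemma~\ref{ref4} applied to $A_{\sharp\argomento}=A_\argomento^T$ and transported by the conjugation $L\cdot L\m$, and the images $B_{\sharp\bm{s}_i}[K]$ overlap at most in single points (mirroring the ``consecutive intervals share an endpoint'' picture from \S\ref{ref2}); passing to interiors kills those shared endpoints. And (1.3) follows because $\Phi_{\sharp\ms{B}}(K)=K$, so $\Phi_{\sharp\ms{B}}(V)$ differs from $V$ only by the countably many images-of-endpoints, a Lebesgue-null set.

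Thus everything hinges on the single hard point: \emph{$K$ has nonempty interior, and in fact $\overline{\operatorname{int}K}=K$}. This is precisely the ``tameness'' claim highlighted in the abstract, and the place where the deep input enters. My approach is the one foreshadowed in the introduction and in \S\ref{ref12}: use the Minkowski-type homeomorphism $M=M_m$ of \S\ref{ref11}, which conjugates the Hecke action of $\Sigma$ (via $B_\argomento$, equivalently $A_\argomento$) to the affine action (via $C_\argomento$). Under $M$, the dual IFS $\sharp\ms{B}$ is conjugate to the \emph{affine} IFS $\set{C_{\sharp\bm{s}_1},\dots,C_{\sharp\bm{s}_q}}$ acting on $[0,1]$, whose maps have slopes that are powers of $1/(m-1)$ and translations in $\Zbb[1/(m-1)]$. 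For such a self-similar-on-a-lattice system one can read off directly whether the attractor has interior: the leaves of a decorated tree correspond to a \emph{prefix-free, complete} set of words in the $(m-1)$-ary expansion (up to the $f$-decoration, which by $C_fC_j=C_{m-j}C_f$ acts as a reflection and does not change the image interval), so the affine images $C_{\sharp\bm{s}_i}[0,1]$ tile $[0,1]$ exactly, whence the affine attractor is all of $[0,1]$ and manifestly has full interior. Pulling back by the homeomorphism $M\m$ shows $K=M\m[0,1]=[0,\lambda\m]$ has interior, and $\overline{\operatorname{int}K}=K$. The role of the Poincar\'e-series divergence mentioned in \S\ref{ref12} is exactly to guarantee that $M$ is a genuine homeomorphism (single-valued, with no gaps) along the monoid generated by the inverse branches — i.e.\ that the intersections in \eqref{eq6} really are singletons and the conjugacy is bijective; this is where I expect the main technical obstacle to lie, and it is handled by the $\Gamma_m^\pm$-specific estimates of \S\ref{ref11}--\S\ref{ref12} rather than by soft IFS theory.

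With (1) in hand, the remaining parts are bookkeeping. For (2): $F_\sharp$ is the piecewise inverse of $\Phi_{\sharp\ms{B}}$ on $V$, so $V'=\bigcap_n\Phi^n_{\sharp\ms{B}}(V)$ is the residual set on which all iterates are defined; full measure and $\overline{V'}=K$ follow from (1.3) exactly as the analogous facts for $U'$ were deduced for $F$ in \S\ref{ref5}, and the Markov-with-full-branches property is immediate from $F_\sharp\colon B_{\sharp\bm{s}_i}[V']\to V'$ being onto. For (3): one checks that the density in \eqref{eq3} is the Jacobian cocycle that makes both maps in \eqref{eq4} measure-preserving — this is a direct computation using that $\eta$ is, up to the coordinate change, the $\PSL_2^\pm\Rbb$-invariant area form $\ud x\,\ud y/(x-y)^2$ pulled back by $(x,y)\mapsto(L\m x,L\m y)$ (the factor $1-\lambda x-\lambda y+(\lambda^2+1)xy$ being the relevant product of denominators), and then the two formulas in \eqref{eq4} are inverse to each other off the null set where the ``wrong'' branch is selected, precisely because $B_{\bm{s}_{i(y)}}$ and $B_{\sharp\bm{s}_{i(y)}}$ are the matching local inverses. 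For (4): $F_e$ is invertible and measure-preserving by (3), it has $(U,\mu,F)$ as a factor via $\pi_1$ with $\mu=(\pi_1)_*\eta$, and one verifies the universal property by the standard argument that the $\sigma$-algebra generated by the forward iterates of $\pi_1$ separates points of $U\times V'$ — equivalently, that the second coordinate is recovered as $\lim_n B_{\sharp\bm{s}_{i(x)}}\cdots(\text{anything})$, which is exactly \eqref{eq6}; uniqueness of the a.c.\ invariant measure then follows from ergodicity of $F_e$ (hence of $F$), which holds because the Markov map $F\restriction U'$ with full branches is exact. Statement (5) is the symmetric statement obtained by exchanging the roles of the two coordinates and of $\bm{s}_i\leftrightarrow\sharp\bm{s}_i$, with $\nu=(\pi_2)_*\eta$ computed by integrating \eqref{eq3} in $x$ to get \eqref{eq5}. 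Finally (6) is just transport of structure along the conjugating homeomorphism $L\m$: replacing every $B$ by the corresponding $A$ conjugates $F$, $F_\sharp$, $F_e$ by $L\m$ (resp.\ $L\m\times L\m$), carries $U=(0,\lambda\m)$ to $(0,\infty)$, and pushes the density \eqref{eq3} forward to $(1-xy)^{-2}\ud x\,\ud y$ and \eqref{eq5} to $y\m\ud y$ by the same denominator computation as in (3); since $L\m$ is a diffeomorphism, every measure-theoretic and topological assertion is preserved verbatim.
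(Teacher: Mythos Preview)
Your argument for (1) has a genuine gap at its central step. You claim that the affine images $C_{\sharp\bm{s}_i}[0,1]$ ``tile $[0,1]$ exactly'' because the leaves come from a decorated tree, and conclude that the affine attractor $K_1$ is all of $[0,1]$ (hence $K=[0,\lambda\m]$). This is false: it is the images $C_{\bm{s}_i}[0,1]$ --- for the \emph{original} leaves --- that tile $[0,1]$; the $\sharp$-dual set $\set{\sharp\bm{s}_1,\dots,\sharp\bm{s}_q}$ is in general \emph{not} the set of leaves of any decorated tree, and the images $C_{\sharp\bm{s}_i}[0,1]$ may overlap heavily or fail to cover. Already for the Farey map ($m=3$, leaves $1,2f$) the dual pair is $\set{2,2f}$, and both $C_2$ and $C_{2f}$ send $[0,1]$ onto $[1/2,1]$; the attractor is $[1/2,1]$, not $[0,1]$. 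The examples in \S\ref{ref10} show that $K$ can have countably or even uncountably many components, so $K=[0,\lambda\m]$ fails badly in general, and your choice $V=\operatorname{int}K$ together with the tiling claim cannot work as written.

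The paper's actual route for (1) is different. After conjugating by $M$ it does \emph{not} try to identify $K_1$ explicitly; instead it proves a separation property (the identity is isolated from $\set{C\m D:C,D\in C_\Sigma,\ C\ne D}$), applies Bandt--Graf to obtain $\Leb(K_1)>0$, and then Schief's theorem to get the strong open set condition with $\overline{V_1}=K_1$; pulling back by $M\m$ yields (1.1)--(1.2). You also misplace the role of the Poincar\'e-series divergence: it is \emph{not} needed for $M$ to be a homeomorphism (that is elementary and done at the start of \S\ref{ref11}); it is the key ingredient for (1.3). Because $M$ is singular, the trivial fact $\Leb\bigl(V_1\setminus\Phi_{\sharp\ms{C}}(V_1)\bigr)=0$ does \emph{not} transport to $\Leb\bigl(V\setminus\Phi_{\sharp\ms{B}}(V)\bigr)=0$; the paper obtains the latter by bounding $\Leb(B_u[W])$ below by $r_u\Leb(W)$ and proving $\sum_u r_u=\infty$, which it reduces (via a Cayley-graph factorization) to the divergence of the Poincar\'e series of $\Gamma^\pm$. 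So your one-line justification of (1.3) (``differs by countably many endpoints'') is unsupported. Your sketches of (2)--(6) are broadly in line with the paper's treatment.
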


The main statement of Theorem~\ref{ref8} is~(1), the other items following familiar lines. Our results should be compared with those in~\cite{arnouxschmidt19}, in which natural extensions for piecewise projective maps also result from the construction of an attractor. The approach of~\cite{arnouxschmidt19} is more general, as it covers multidimensional cases and Markov maps with not necessarily full branches. Of course this has a price: the attractor to be constructed is not $1$-dimensional as our $K$, but higher-dimensional and of elusive visualization. More relevantly, our simpler product structure yields that not only $F_e$ is a skew product over $F$, but $F_e\m$ is a skew product as well, over the new map $F_\sharp$. If the original map does not have full branches, then the composition of $F_e\m$ with projection on the second component may not be a function of the variable $y$ alone; in such cases $F_\sharp$ does not exist. A good example of this phenomenon is in~\cite{kraaikamplangeveld17}; see the jagged boundary between the regions $\mc{T}(\Delta_3)$ and $\mc{T}(\Delta_4)$
in~\cite[Fig.~16 right]{kraaikamplangeveld17}.

\begin{definition}
We\label{ref9} call the metric system $(V,\nu,F_\sharp)$ the \newword{dual} of $(U,\mu,F)$. If $F$ is conjugate to $F_\sharp$ via an element of $\Gamma^\pm$, then we say that $F$ is \newword{selfdual}.
\end{definition}

We will proceed as follows: we close this section by proving Theorem~\ref{ref8}(2)--(6) under the assumption of (1). 
In \S\ref{ref10} we will provide examples, some of which of independent interest. In \S\ref{ref11}, making no use of~(1), we will construct, for each $m=3,4,5,\ldots$, a homeomorphism $M_m:[0,\lambda_m\m]\to[0,1]$ that conjugates the action of $B_{\bm{s}}$ with that of $C_{\bm{s}}$, for every $\bm{s}\in\Sigma$; note that $M_3$ is the classical Minkowski question mark function~\cite{denjoy38}, \cite{salem43}, \cite{jordansahlsten16}. In Theorem~\ref{ref21} we will explicitly determine the H\"older exponent of each $M_m$.
Finally in~\S\ref{ref12}, relying on the results of~\S\ref{ref11}, we will prove Theorem~\ref{ref8}(1).

\begin{remark}\label{ref13}
\begin{enumerate}
\item It is expedient to think of $F$ and $F_\sharp$ to be defined on all of $[0,\lambda\m]$ and $K$, respectively. Using Theorem~\ref{ref8}(2) we simply agree to set $F_\sharp(y)=B_{\sharp\mb{s}_{i(y)}}\m(y)$, where $i(y)$ is the minimum index $i\in\set{1,\ldots,q}$ such that $y\in K$ is an accumulation point for $B_{\sharp\mb{s}_i}[V']$; analogous conventions apply to $F$.

\item As implicit in Definition~\ref{ref9}, we consider conjugate maps as the same map. Our definition of $F_\sharp$ relies on the identification of the complementary interval $[\lambda\m,0]$ with $[0,\lambda\m]$ via the involution $LSL\m$. We could have identified the two intervals via
$L\ppmatrix{-1}{}{}{1}L\m$, which is still in $\Gamma^\pm$, or even have avoided any identification. However, the resulting dual map would have been conjugate to our~$F_\sharp$, so these choices are irrelevant.

\item Besides the identity, the only element of $\Gamma^\pm$ that fixes $[0,\lambda\m]$ globally is~$B_f$. 
Conjugating $F$ by $B_f$ amounts to flipping the decorated tree determining $F$ around its vertical axis. This gives nothing new, as the resulting dual is the old one conjugated by $B_f$.

\item Given $F$, its $n$-fold composition $F^n$ is again a Farey-type map, and $(F^n)_\sharp=(F_\sharp)^n$. Indeed, let $F$ be determined by the set of decorated leaves $\mb{S}=\set{\mb{s}_1,\ldots,\mb{s}_q}\subset\Sigma$. Then $F^n$ is determined by the set of all $n$-long products of elements of $\mb{S}$.
Since the $\sharp$-image of such a product is the product of the $\sharp$-images of the factors in inverse order, $(F^n)_\sharp$ is determined by the set of $n$-long products of elements of $\sharp[\bm{S}]$. This set has as attractor the same attractor of the original $\sharp[\bm{S}]$, and the identity $(F^n)_\sharp=(F_\sharp)^n$ follows by a straightforward argument.
\end{enumerate}
\end{remark}

\begin{proof}[Proof of Theorem~\ref{ref8}(2)--(6) assuming (1)]
(2) It is easy to show that
\[
V\setminus V'\subseteq \bigcup_{n\ge0}\Phi^n\bigl(V\setminus\Phi(V)\bigr).
\]
All maps in $\sharp\ms{B}$ transform Lebesgue nullsets into Lebesgue nullsets; therefore, by~(1.3), $V\setminus V'$ is a Lebesgue nullset.
This implies that the open set $V\setminus\overline{V'}$ is empty and $V\subseteq\overline{V'}$; thus $K=\overline{V}\subseteq\overline{V'}\subseteq\overline{V}=K$.
It is also plain that $V'$ is the disjoint union of $B_{\sharp\mb{s}_1}[V'],\ldots,
B_{\sharp\mb{s}_q}[V']$, none of the latter sets being empty; thus $F_\sharp\restriction V'$ is a Markov map as claimed.

(3) Both $F_e$ and $F_e\m$ are defined on $U'\times V'$, and so are all iterates. From their very definition, they
are inverse to each other.
In order to prove the stated invariance of measures, we temporarily switch to the version on $[0,\infty]$. This is irrelevant, because the pullback of the conjugating diffeomorphism $L:[0,\infty]\to[0,\lambda\m]$ maps the forms in~\eqref{eq3} and~\eqref{eq5} to those specified in~(6).
Let $\mf{A}<\PSL_2\Rbb$ be the subgroup of diagonal matrices. Then $\PSL^\pm_2\Rbb$ acts to the left on the homogeneous space $\PSL_2\Rbb/\mf{A}$ via
\[
A(E\mf{A})=AE\begin{pmatrix}
\det(A) & \\
 & 1
\end{pmatrix}\mf{A},
\]
and this action preserves the quotient Haar measure (see, e.g.,~\cite[Theorem~4.1]{panti20b}). We can identify $\PSL_2\Rbb/\mf{A}$ with the space of oriented geodesics in the hyperbolic plane, in turn identified ---by looking at the geodesics' limit points--- with the open cylinder $\set{(\omega,\alpha):\omega,\alpha\in\PP^1\Rbb\text{ and }\omega\not=\alpha}$. The action is then componentwise $A(\omega,\alpha)=(A(\omega),A(\alpha))$, and the quotient Haar measure is to the one induced by the form $(\omega-\alpha)^{-2}\ud\omega\ud\alpha$.
Let $\bm{s}_i$ be one of the leaves determining our map~$F$; then $A_{\bm{s}_i}\m$ expands $I_{\bm{s}_i}$ to $[0,\infty]$, and contracts $[\infty,0]$ inside itself. The change of variables $(\omega,\alpha)=Q(x,y)=(x,S(y))$ conjugates the action of $A_{\bm{s}_i}\m$ on $I_{\bm{s}_i}\times[\infty,0]$ with that of $F_e$ on $I_{\bm{s}_i}\times[0,\infty]$;
moreover, the pullback of $(\omega-\alpha)^{-2}\ud\omega\ud\alpha$ by $Q$ is $(1-xy)^{-2}\ud x\ud y$. Since, as discussed above, $A_{\bm{s}_i}\m$ preserves the measure induced by the former, $F_e$ must preserve the measure induced by the latter.

We conclude the proof of~(4)--(6) by reverting to the version of $F$ on $[0,\lambda\m]$ and proving the uniqueness of $\mu$ stated in~(4), as well as the fact that $F_e$ and $F_e\m$ are the natural extensions of $F$ and $F_\sharp$, respectively.
By~\cite{thaler83}, transformations such as $F$ admit precisely one invariant measure absolutely continuous w.r.t.\ Lebesgue measure; clearly $\mu$ is such a measure, and this settles uniqueness.
In order to show that $F_e$ is the natural extension of $F$, we must show the following~\cite[p.~22]{rohlin61}:
\begin{itemize}
\item for $\eta$-almost all pairs $(x,y),(x',y')$ of distinct points, there exists $n\ge0$ such that $\pi_1\bigl(T_e^{-n}(x,y)\bigr)\not=\pi_1\bigl(T_e^{-n}(x',y')\bigr)$.
\end{itemize}

We may assume $(x,y),(x',y')\in U'\times V'$ with $x=x'$ (otherwise take $n=0$). Since $y\not=y'$ and
the intersection in~\eqref{eq6} is always a singleton, there must exist $t\ge0$ such that $i\bigl(F_\sharp^t(y)\bigr)\not= i\bigl(F_\sharp^t(y')\bigr)$, which in turn yields
$\pi_1\bigl(T_e^{-(t+1)}(x,y)\bigr)\not=\pi_1\bigl(T_e^{-(t+1)}(x,y')\bigr)$, as desired. The same arguments shows that $F_e\m$ is the natural extension of $F_\sharp$. This concludes the proof of Theorem~\ref{ref8}(2)--(6), under the assumption of~(1).
\end{proof}

\section{Examples: the number of components of $K$}\label{ref10}

The attractor $K$ of $\sharp\ms{B}$ is always a perfect set (i.e., closed without isolated points), but never a Cantor set.
This follows from Theorem~\ref{ref8}(1), that we are still assuming in this section; it will be proved in the next two sections, which do not depend on the present one.
We classify $K$ according to the number of its connected components; each component is either a closed interval of positive length, or a single point.

\begin{proposition}
Let\label{ref16} the attractor $K$ be as above; then precisely one of the following statements holds.
\begin{itemize}
\item[(i)] $K$ has finitely many components, and all of them are intervals;
\item[(ii)] $K$ has countably many interval components and at most countably many point components;
\item[(iii)] $K$ has countably many interval components and $2^{\aleph_0}$ point components.
\end{itemize}
\end{proposition}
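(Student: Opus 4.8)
The plan is to deduce the trichotomy from the strong open set condition by pure point-set topology: once Theorem~\ref{ref8}(1) is granted, $K$ is the closure of a nonempty open set, and the iterated function system structure of $K$ plays no further direct role.

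Write $W$ for the interior of $K$. Condition~(1.1) gives $K=\overline V$ with $V$ a nonempty open subset of $[0,\lambda\m]$; hence $V\subseteq W$, $\overline W=K$, and in particular $W\neq\emptyset$. Being open, $W$ is a disjoint union of intervals, and its family of connected components $\set{W_\alpha}$ is at most countable. I would then check three elementary facts. First, each $\overline{W_\alpha}$ is a connected component of $K$: no nondegenerate interval contained in $K$ can meet two distinct $W_\alpha$'s, since such a ``bridge'' would put a boundary point of some $W_\alpha$ into $\operatorname{int}K$, contradicting maximality of that component. Second, conversely, every nondegenerate interval component $[a,b]$ of $K$ satisfies $(a,b)\subseteq W$, hence coincides with some $\overline{W_\alpha}$. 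Third, it follows that the point components of $K$ are exactly the members of the set $P:=K\setminus\bigcup_\alpha\overline{W_\alpha}$. Thus the number of interval components of $K$ equals the cardinality $k$ of $\set{W_\alpha}$ --- a positive integer or $\aleph_0$ --- and the number of point components equals $\abs P$.

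The argument then splits on $k$. If $k$ is finite, $K=\overline W=\overline{W_1}\cup\dots\cup\overline{W_k}$ is a finite union of closed intervals, so $P=\emptyset$ and we are in case~(i). If $k=\aleph_0$, then $\bigcup_\alpha\overline{W_\alpha}$ is an $F_\sigma$ set, so $P$ is a $G_\delta$ subset of the compact metric space $K$, hence itself a Polish space; by the Cantor--Bendixson theorem a Polish space is either countable or contains a homeomorphic copy of the Cantor space, and since $P\subseteq\Rbb$ this forces $\abs P\le\aleph_0$ --- case~(ii) --- or $\abs P=2^{\aleph_0}$ --- case~(iii). Since $k$ finite versus $k=\aleph_0$, and $\abs P\le\aleph_0$ versus $\abs P=2^{\aleph_0}$, are exclusive alternatives, exactly one of (i), (ii), (iii) holds.

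I do not expect a real obstacle: all the genuine work sits inside Theorem~\ref{ref8}(1), which is assumed here. The two points requiring care are the topological identification of the interval components with the $\overline{W_\alpha}$ --- in essence, showing that distinct components of $\operatorname{int}K$ cannot be fused into one component of $K$ --- and the appeal to the Cantor--Bendixson theorem that collapses the a priori range of $\abs P$ to ``finite, $\aleph_0$, or $2^{\aleph_0}$''.
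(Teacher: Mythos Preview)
Your argument is correct. The identification of the interval components of $K$ with the closures $\overline{W_\alpha}$ of the components of $\operatorname{int}K$ goes through exactly as you sketch, the finite case collapses because closure commutes with finite unions, and in the infinite case $P$ is indeed $G_\delta$ in the compact metric space $K$, so the perfect set property for Polish spaces gives the dichotomy $\abs{P}\le\aleph_0$ versus $\abs{P}=2^{\aleph_0}$.

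The paper argues differently on both steps. For the first implication (a point component forces infinitely many interval components) the paper uses the IFS structure: writing $\set{x}=\bigcap_n B_{\sharp\mb{s}_{\omega(0)}}\cdots B_{\sharp\mb{s}_{\omega(n-1)}}[K]$ and noting that each finite image of $K$ contains an interval, one sees that $x$ is accumulated by interval components. For the cardinality $2^{\aleph_0}$ in case~(iii), the paper collapses each interval component to a point via a continuous map and invokes the fact that the image remains perfect. Your route bypasses both devices: you never touch the IFS beyond the single input $K=\overline V$, and you replace the collapsing map by an appeal to Cantor--Bendixson. This makes your proof more portable --- it applies verbatim to the closure of any nonempty open subset of $\Rbb$ --- at the modest cost of quoting a descriptive set theory fact rather than the more hands-on ``shrink and stay perfect'' picture. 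The paper's version, in turn, keeps the self-similar geometry visible and avoids naming the Polish space machinery.
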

\begin{proof}
Let $x=\pi_{\sharp\ms{B}}(\omega)\in K$ for some $\omega\in\set{1,\ldots,q}^{\Zbb\p}$; then
\[
\set{x}=\bigcap_{n\ge0} B_{\sharp\bm{s}_{\omega(0)}}\cdots
B_{\sharp\bm{s}_{\omega(n-1)}}[K].
\]
If $x$ is a single-point component then, since by Theorem~\ref{ref8}(1) $K$ contains at least one interval, and $x$ does not belong to any interval component, $x$ is an accumulation point of interval components. In particular, the existence of a point-component implies the existence of countably many interval-components. The three statements above are of course mutually exclusive. Suppose both (i) and (ii) are false; then there are countably many interval components and uncountably many point components (we are not assuming the continuum hypothesis). By an easily definable continuous function we can shrink every interval component to a point. The resulting image set is nonempty and still perfect, and thus has cardinality $2^{\aleph_0}$~\cite[Lemma~4.2]{jech78}; this establishes~(iii).
\end{proof}

\begin{example}
For every $m$ the map determined by the set of leaves $\set{1,2,\ldots,m-1}$ is selfdual; indeed, that set is invariant under the involution~$\sharp$ of Lemma~\ref{ref4}. These are the maps used in~\cite[p.~2185]{pohl14}, Pohl's $g_k$ being our $A_{m-k}$; their invariant density is $x\m\ud x$ in the $[0,\infty]$ version, and $x\m(1-\lambda x)\m\ud x$ in the $[0,\lambda\m]$ version. The same statements hold for the set of leaves $\set{1f,2f,\ldots,(m-1)f}$ and, whenever $m$ is even, for the saw-like maps $\set{1,2f,3,4f,\ldots,(m-2)f,m-1}$ and $\set{1f,2,3f,4,\ldots,m-2,(m-1)f}$.
\end{example}

\begin{example}
Let\label{ref15} us compute the duals of the maps in Figure~\ref{fig4}. The one to the left is the square of the Farey map $F$; by Remark~\ref{ref13}(4), we may simply compute the dual of $F$ and take its square. The leaves determining $F$ are $1,2f$, whose $\sharp$-images are $2,2f$. We have then to compute the attractor of
$\sharp\ms{B}=\set{B_2,B_{2f}}$, which is easy. Indeed, the image of $[0,1]$ under both $B_2=\ppmatrix{}{1}{-1}{2}$ and $B_{2f}=\ppmatrix{}{1}{1}{1}$
is $[1/2,1]$; this latter interval is the attractor, since it is the union of its $B_2$-image $[2/3,1]$ and its $B_{2f}$-image $[1/2,2/3]$. Thus $F_\sharp$ is induced by $B_{2f}\m$ on $[1/2,2/3]$, and by $B_2\m$ on $[2/3,1]$; we must now consider $F_\sharp^2$, which is shown in Figure~\ref{fig5} left.
It is easily seen that $F$ and $F_\sharp$ are conjugate by $\ppmatrix{}{1}{1}{1}$, so all powers of the Farey map are selfdual.
\begin{figure}[h!]
\includegraphics[width=4.5cm]{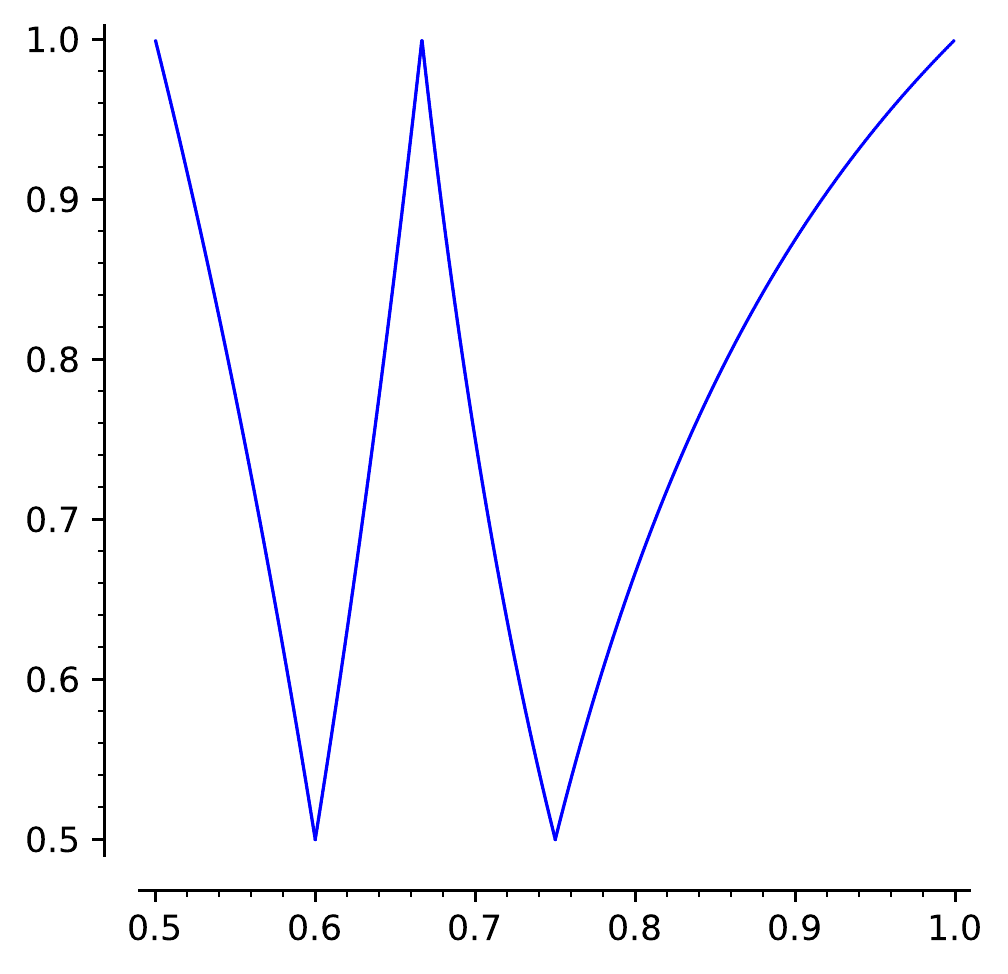}
\hspace{0.6cm}
\includegraphics[width=4.5cm]{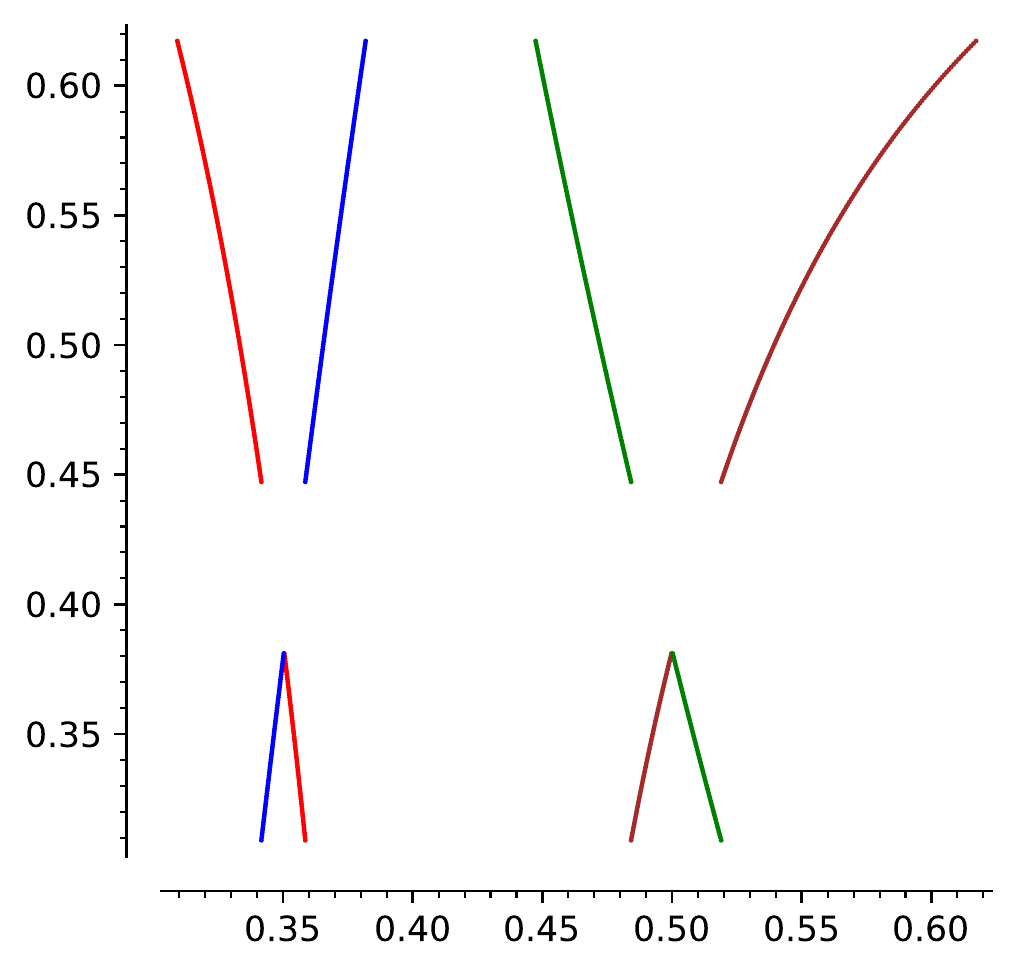}
\caption{The duals of the maps in Figure~\ref{ref4}}
\label{fig5}
\end{figure}

The map to the right of Figure~\ref{fig4} is determined by the leaves $1,2f,3,4f$; here $m=5$ and $\lambda$ is the golden ratio. The $\sharp$-images of those leaves are, respectively, $4,2f,2,4f$, and we have
\begin{align*}
B_4&=\begin{pmatrix}
-\lambda & \lambda\\
-1-2\lambda & 2+\lambda
\end{pmatrix},
&B_{2f}&=\begin{pmatrix}
-\lambda & \lambda\\
-2-\lambda & 1+2\lambda
\end{pmatrix},\\
B_2&=\begin{pmatrix}
 & 1\\
-1 & 2\lambda
\end{pmatrix},
&B_{4f}&=\begin{pmatrix}
 & 1\\
1 & \lambda
\end{pmatrix}.
\end{align*}
Now, $B_4[0,\lambda\m]=B_{4f}[0,\lambda\m]=\bigl[(-1+2\lambda)/5,\lambda\m\bigr]$
and
$B_{2f}[0,\lambda\m]=B_2[0,\lambda\m]=\bigl[(-1+\lambda)/2,2-\lambda\bigr]$.
The union of these two intervals is the attractor $K$; indeed, its $\Phi_{\sharp\ms{B}}$-image is the union of eight intervals, that glue together along endpoints to reconstruct $K$. We draw the graph of $F_\sharp$ in Figure~\ref{fig5} right, the branch $B_4\m$ being drawn in brown, $B_{2f}\m$ in red, $B_2\m$ in blue, and $B_{4f}\m$ in green. Clearly $F$ and $F_\sharp$ are not conjugate.
\end{example}

\begin{example}
Let\label{ref14} $m=3$. Remember that the Gauss map $G(x)=1/x-\floor{1/x}$ can be seen as Schweiger’s jump transformation~\cite[Chapter~18]{schweiger95} of the Farey map $F$ w.r.t.~the entrance time in~$[1/2,1]$; more precisely, $G(x)=F^{e(x)+1}(x)$, where
\[
e(x)=\min\set{t\ge0:F^t(x)\in[1/2,1]}.
\]
Let us replace, for $n=1,2,3,\ldots$, the entrance time with the $(n-1)$-truncated entrance time, namely
\[
e_n(x)=\min\set{t\ge0:F^t(x)\in[1/2,1]\text{ or }t\ge n-1}.
\]
We obtain $F_n(x)=F^{e_n(x)+1}(x)$, $F_1=F$, and $F_\infty=G$. 
Then $F_n$ is a slow map, determined by the set of leaves $\set{1^n}\cup\set{1^q2f:n>q\ge0}$, whose $\sharp$-image is $\set{2^n}\cup\set{21^qf:n>q\ge0}$.
We spare the reader the computation of the attractor, which turns out to be
\[
K_n=
\bigcup_{k\ge0}\biggl[\frac{kn+1}{kn+2},\frac{kn+2}{kn+3}\biggr]
\cup\set{1},
\]
hence of the type in Proposition~\ref{ref16}(ii).
In Figure~\ref{fig6} we draw from left to right the graph of $F_3$, the IFS $\sharp\ms{B}=\set{B_{222},B_{211f},B_{21f},B_{2f}}$, and the dual map ${F_3}_\sharp$.
\begin{figure}[h!]
\includegraphics[width=4.1cm]{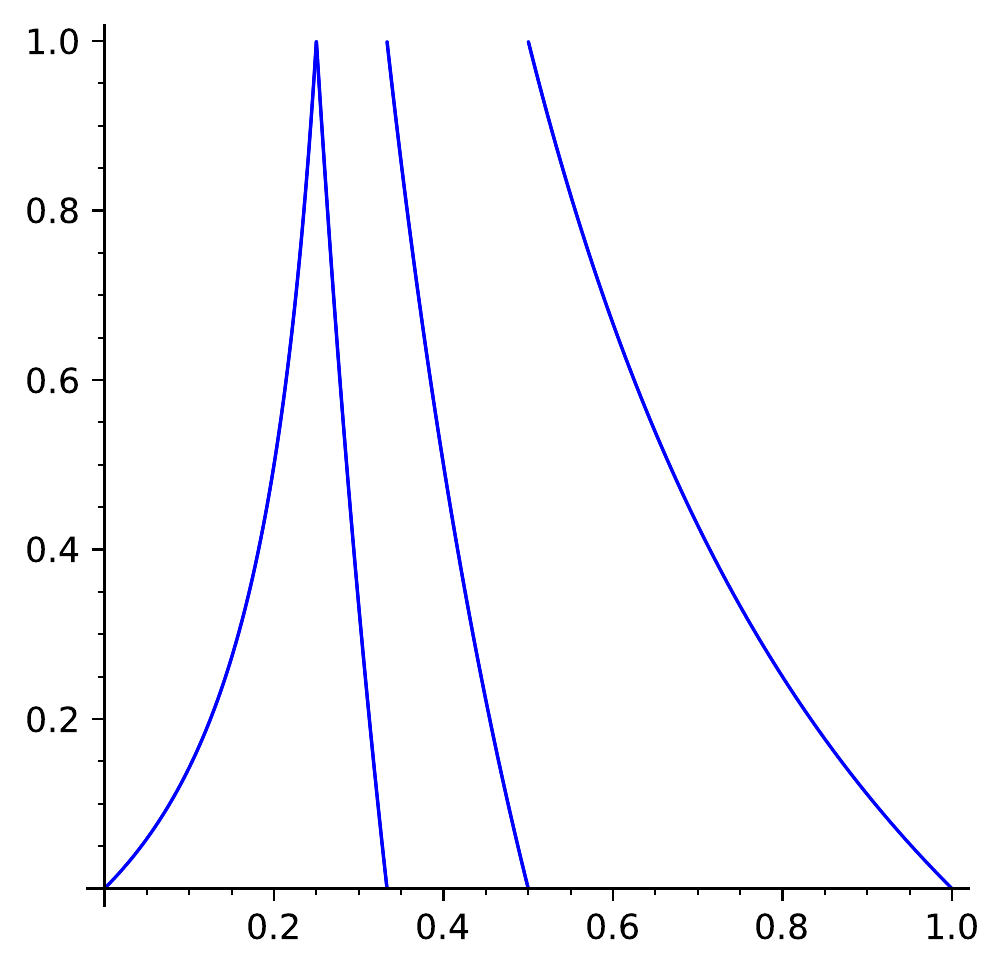}
\hspace{0.0cm}
\includegraphics[width=4.1cm]{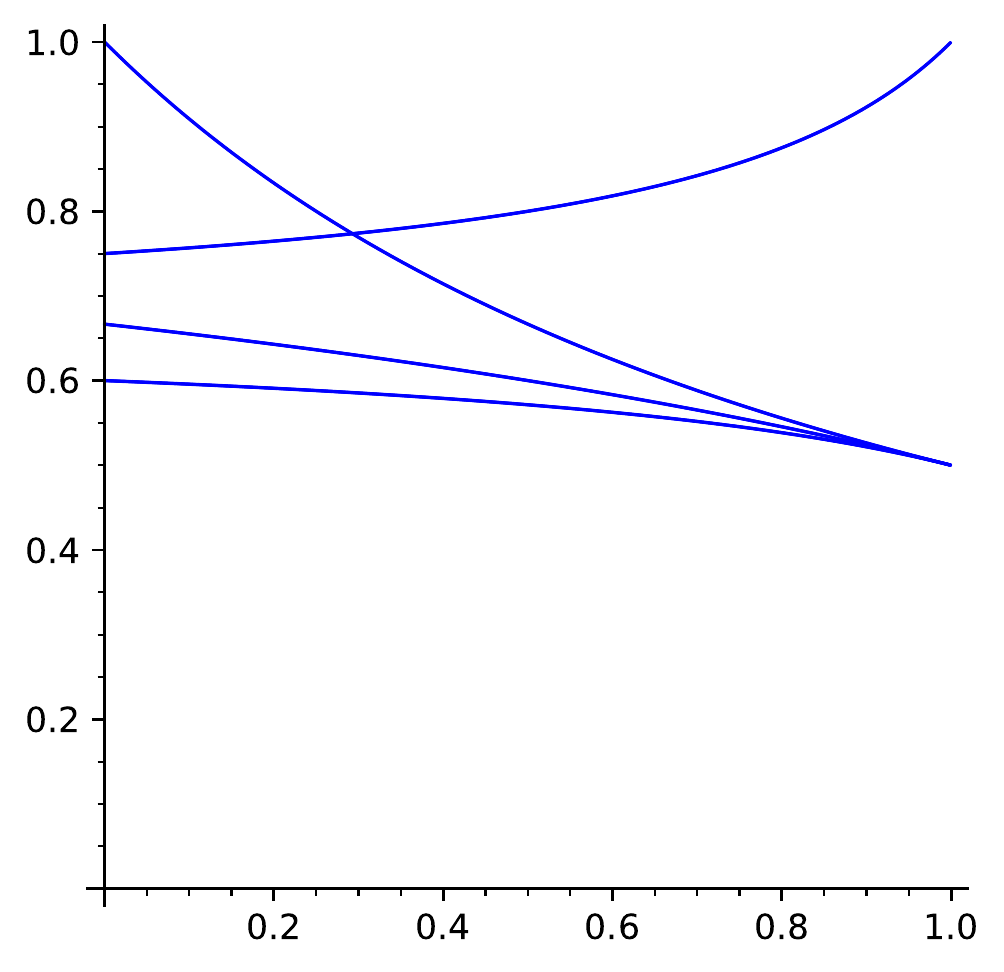}
\hspace{0.0cm}
\includegraphics[width=4.1cm]{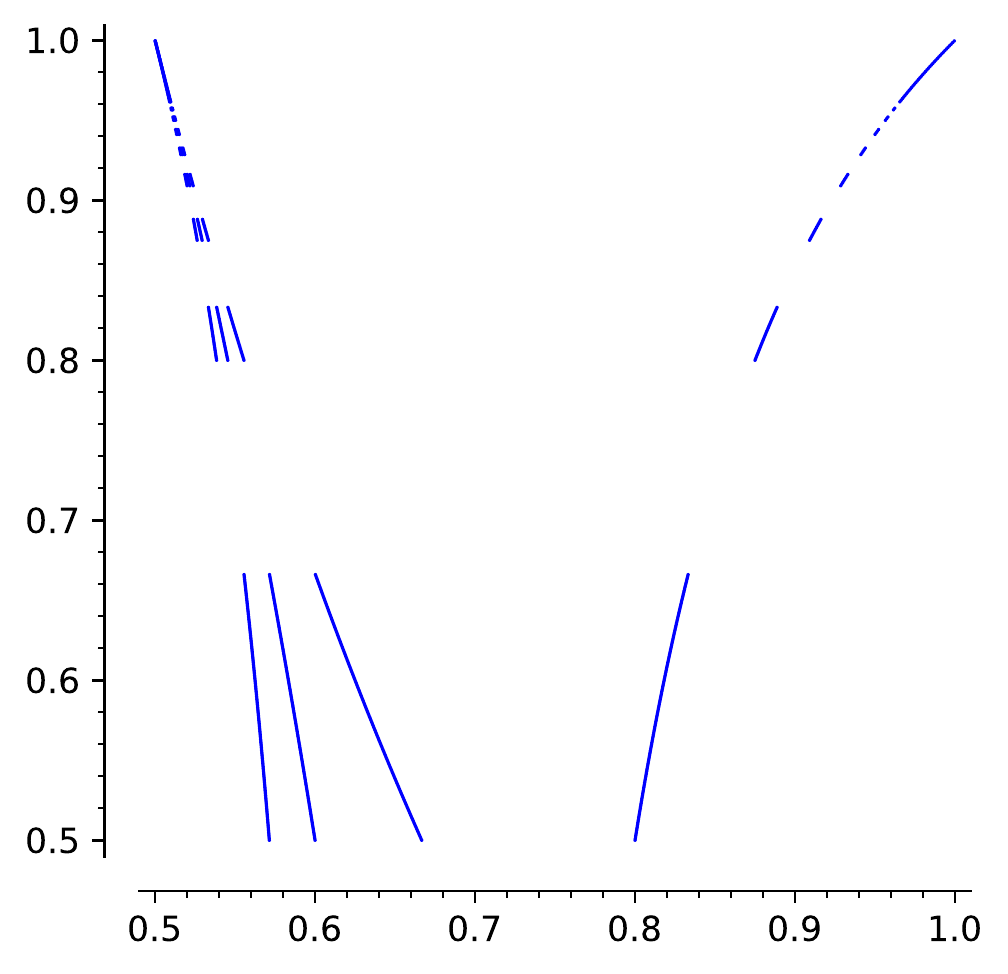}
\caption{The map $F_3$ of Example~\ref{ref14}, its dual IFS, and ${F_3}_\sharp$}
\label{fig6}
\end{figure}

Theorem~\ref{ref8}(4) allows the explicit computation of the $F_n$-invariant
density $h_n$, namely
\begin{align*}
h_n(x)&=\sum_{k\ge0}
\int_{\frac{kn+1}{kn+2}}^{\frac{kn+2}{kn+3}}
\frac{1}{(1-x-y+2xy)^2} \ud y\\
&=\sum_{k\ge0}\frac{1}{(knx+x+1)(knx+1)}\\
&=\frac{1}{nx^2}\biggl[\psi\biggl(\frac{1}{nx}+\frac{1}{n}\biggr)-
\psi\biggl(\frac{1}{nx}\biggr)\biggr],
\end{align*}
where $\psi$ is the digamma function. The functional equation $\psi(s+1)=\psi(s)+1/s$ yields $h_1(x)=1/x$, as expected. On the other hand, 
by the Dominated Convergence Theorem,
for every $0<x\le1$ we have $\lim_{n\to\infty}h_n(x)\to 1/(1+x)$ which is, up to the normalizing constant, the invariant density of $G$.
\end{example}

\begin{example}
At\label{ref17} the end of~\cite[\S7]{arnouxschmidt19} it is asked if it is true that in the orientation-preserving case the domain of the natural extension is always connected. We provide a negative answer by giving an example over the integers (thus $m=3$ again), with undecorated leaves and such that $K$ is of the type in Proposition~\ref{ref16}(iii). Let $F$ 
(see Figure~\ref{fig7} left)
be determined by the six leaves $1^3,1^22,12,21,2^21,2^3$. It is easy to see that the branches of $F$ generate $\PSL_2\Zbb$ (this is a special case of~\cite[Corollary~4.3]{panti18}).
\begin{figure}[h!]
\includegraphics[width=6.0cm]{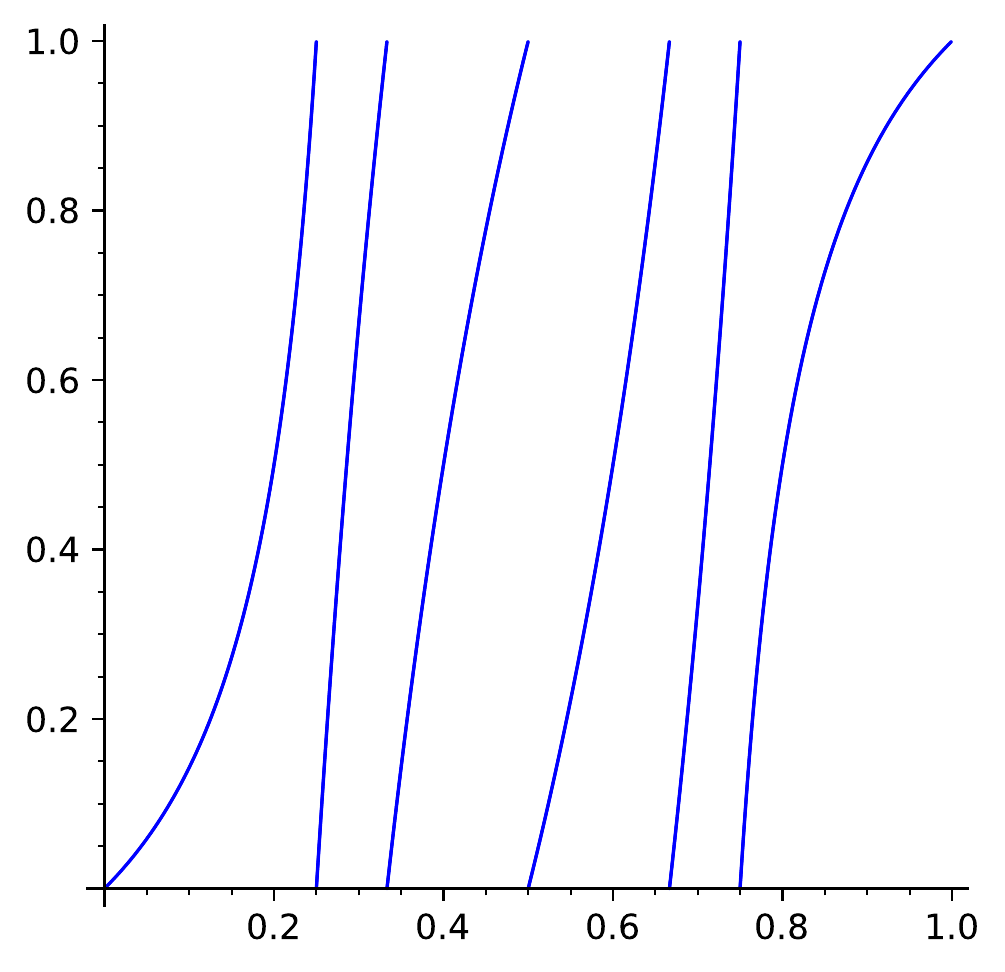}
\hspace{0.0cm}
\includegraphics[width=6.0cm]{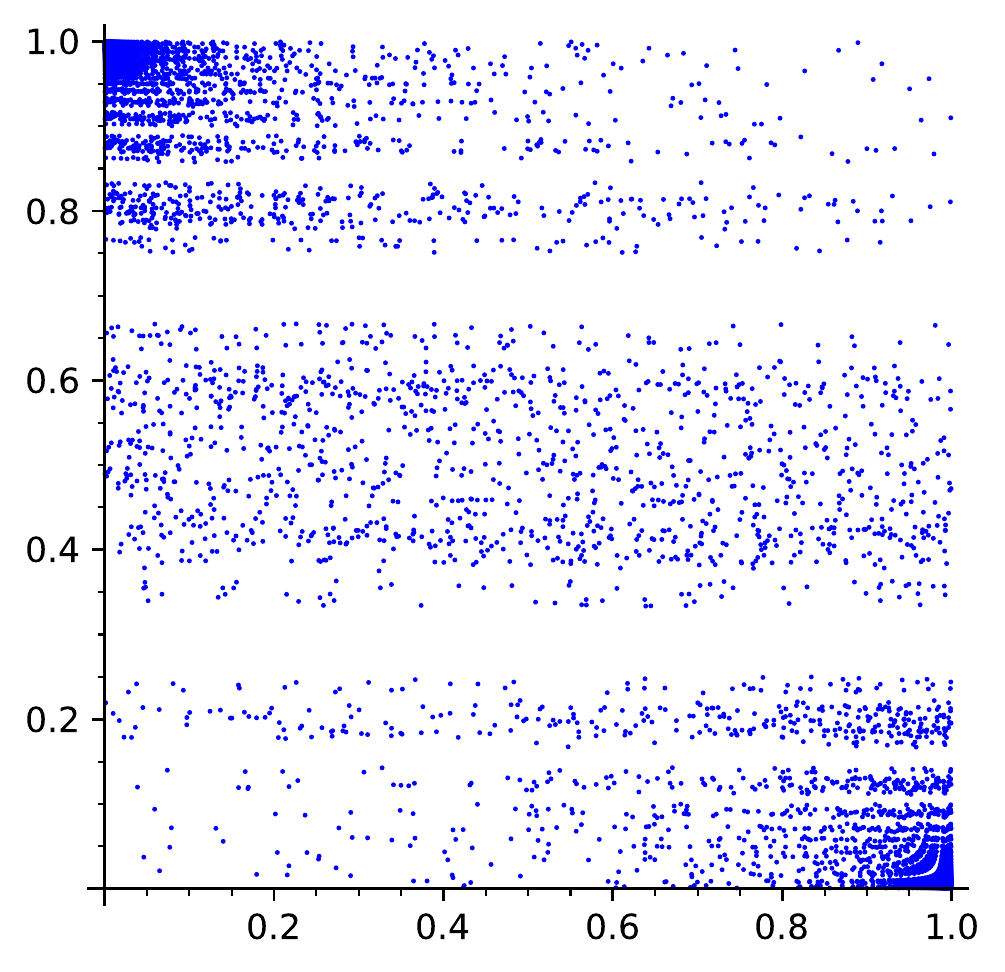}
\caption{The map $F$ of Example~\ref{ref17} and a generic $F_e$-orbit}
\label{fig7}
\end{figure}

For simplicity's sake we rename the elements of $\sharp\ms{B}=\set{B_{2^3},B_{12^2},B_{12},B_{21},B_{21^2},B_{1^3}}$ as $D_1,D_2,\ldots,D_6$, the numbering being determined by $D_1(1/2)<D_2(1/2)<\cdots<D_6(1/2)$; also,
for $w=j_0\cdots j_{n-1}\in\set{1,\ldots,6}^*$, we let $\overline{w}=D_{j_0}\cdots D_{j_{n-1}}[0,1]$ and $K_n=\bigcup_{\length(w)=n}\overline{w}$. We plan to show that $K$ has uncountably many point-components, and for this purpose it is enough to show that, for every $n$ and every word $v$ of length $n$ over the restricted alphabet $\set{1,6}$, the closed interval $\overline{v}$ is a component of $K_n$. This amounts to showing that for every word $w\not=v$, again of length~$n$ but over the full alphabet, the intervals $\overline{v}$ and $\overline{w}$ are disjoint, which is easily accomplished by induction on $n$. Indeed, if $v$ and $w$ begin with different digits the statement follows from the easily checked fact that $\overline{1}$ and $\overline{6}$ do not intersect each other, nor intersect any of $\overline{2},\ldots,\overline{5}$; in particular this covers the case $n=1$. Let $n\ge2$ and assume, say, $v=1v',w=1w'$. Then $v'\not=w'$ and $\overline{v}\cap\overline{w}=D_1[\overline{v'}]\cap D_1[\overline{w'}]=
D_1[\overline{v'}\cap\overline{w'}]=D_1[\emptyset]=\emptyset$, by inductive hypothesis.

In Figure~\ref{fig7} right we plot $8000$ points of the $F_e$-orbit of $(\lambda_7-1,1/2)$. Here $\lambda_7-1$ appears only as a ``generic'' cubic irrational, allowing the computation to be approximation-free. The fact that algebraic numbers of degree higher than $2$ should behave (at least empirically) in a generic way under c.f.~maps over the integers is still highly mysterious~\cite{bugeaud15}.
Of course, the density points of the orbit in the upper left and lower right corners correspond to the two parabolic fixed points.
\end{example}

\section{Minkowski functions}\label{ref11}

It is time to let the embedding $\mb{s}\mapsto C_{\mb{s}}$ of Definition~\ref{ref4} come into play. Given~$m$, let $\ms{B}=\set{B_1,\ldots, B_{m-1}}$
and $\ms{C}=\set{C_1,\ldots, C_{m-1}}$; then $\ms{B}$ is a projective IFS and
$\ms{C}$ an affine one. Remember from~\S\ref{ref5} that we have surjective continuous maps $\pi_{\ms{B}},\pi_{\ms{C}}$ to the respective attractors~$[0,\lambda\m]$ and $[0,1]$ (see the vertical arrows in the ensuing~\eqref{eq7}, replacing $q$ in the middle row with $m-1$).
By standard arguments (see, e.g., the proof of~\cite[Theorem~9.2]{panti20b}), it is not difficult to show that $\pi_{\ms{B}}$ and $\pi_{\ms{C}}$ have the same fibers, so that the composition
$M_m=\pi_{\ms{C}}\circ\pi_{\ms{B}}\m$ is well defined and is an order-preserving homeomorphisms from $[0,\lambda\m]$ to $[0,1]$.
The map $M_3$ is the Minkowski question mark function, so we feel justified in naming any element of the family $\set{M_m}_{m\ge3}$
a \newword{Minkowski function}.
An equivalent definition is the following: let $P$ be the Bernoulli measure on $\set{1,\ldots,m-1}^{\Zbb\p}$ that assigns equal weight $1/(m-1)$ to each digit. Then $M$ is the cumulative distribution function of the pushforward measure $\mu=(\pi_{\ms{B}})_*P$ (of course, $(\pi_{\ms{C}})_*P$ is Lebesgue measure on $[0,1]$).

Remember from Remark~\ref{ref18} that $C_{\argomento}\circ(B_{\argomento})\m$ is an algebraic isomorphism between the submonoid $B_\Sigma$ of $\Gamma^\pm$ and the submonoid $C_\Sigma$ of $\Aff\Rbb$. Our next result shows that $M$ globally conjugates the action of $B_\Sigma$ on $[0,\lambda\m]$ to that of $C_\Sigma$ on $[0,1]$.

\begin{theorem}
\begin{enumerate}\label{ref19}
\item For every $\bm{s}\in\Sigma$ we have $M\circ B_{\bm{s}}=C_{\bm{s}}\circ M$.
\item Let $F$ be any Farey-type map as in Definition~\ref{ref6}. Then $T=M\circ F\circ M\m$ is a piecewise-affine selfmap of $[0,1]$ all of whose pieces have coefficients in $\Zbb[1/(m-1)]$.
\end{enumerate}
\end{theorem}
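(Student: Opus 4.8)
The plan is to deduce everything from the identity in~(1), which in turn will be proved first on the generators $\set{1,\ldots,m-1,f}$ of $\Sigma$ and then propagated through the monoid structure. Recall that $M$ is characterised by $M\circ\pi_{\ms{B}}=\pi_{\ms{C}}$, which makes sense precisely because $\pi_{\ms{B}}$ and $\pi_{\ms{C}}$ have the same fibres. The mechanism for the generators is that both coding maps intertwine the same symbolic selfmap of $\set{1,\ldots,m-1}^{\Zbb\p}$ with the corresponding group element. For $j\in\set{1,\ldots,m-1}$ let $\sigma_j:\omega\mapsto j\omega$ be the prepend map. Since the nested intervals $B_jB_{\omega(0)}\cdots B_{\omega(n-1)}[0,\lambda\m]$ are the $B_j$-images of the nested intervals defining $\pi_{\ms{B}}(\omega)$ and $B_j$ is an injective continuous selfmap of $[0,\lambda\m]$, we get $\pi_{\ms{B}}\circ\sigma_j=B_j\circ\pi_{\ms{B}}$, and identically $\pi_{\ms{C}}\circ\sigma_j=C_j\circ\pi_{\ms{C}}$. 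Writing an arbitrary $x\in[0,\lambda\m]$ as $\pi_{\ms{B}}(\omega)$ then gives
\[
M(B_jx)=M\bigl(\pi_{\ms{B}}(\sigma_j\omega)\bigr)=\pi_{\ms{C}}(\sigma_j\omega)=C_j\bigl(\pi_{\ms{C}}(\omega)\bigr)=C_j(Mx).
\]

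For the generator $f$ one works instead with the digit-complementation map $c:\omega\mapsto\bar\omega$, $\bar\omega(i)=m-\omega(i)$. From $fj=(m-j)f$ and Lemma~\ref{ref4} we have $B_fB_j=B_{m-j}B_f$ and $C_fC_j=C_{m-j}C_f$, while $B_f[0,\lambda\m]=[0,\lambda\m]$ (Remark~\ref{ref13}(3)) and $C_f[0,1]=[0,1]$; pushing $B_f$ past a finite product $B_{\omega(0)}\cdots B_{\omega(n-1)}$ and passing to the limit yields $\pi_{\ms{B}}\circ c=B_f\circ\pi_{\ms{B}}$, and identically $\pi_{\ms{C}}\circ c=C_f\circ\pi_{\ms{C}}$. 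The computation above now gives $M\circ B_f=C_f\circ M$. Finally, $\set{\bm{s}\in\Sigma:M\circ B_{\bm{s}}=C_{\bm{s}}\circ M}$ is a submonoid: if it contains $\bm{s}$ and $\bm{t}$ then $M\circ B_{\bm{s}\bm{t}}=M\circ B_{\bm{s}}\circ B_{\bm{t}}=C_{\bm{s}}\circ M\circ B_{\bm{t}}=C_{\bm{s}}\circ C_{\bm{t}}\circ M=C_{\bm{s}\bm{t}}\circ M$, using that $B_{\argomento}$ and $C_{\argomento}$ are homomorphisms (Lemma~\ref{ref4}); since it contains all the generators it is all of $\Sigma$, proving~(1). (Here one uses tacitly that every $B_{\bm{s}}$ sends $[0,\lambda\m]$ into itself, which holds because $\bm{s}$ is either $\bm{j}$ or $\bm{j}f$ and $B_f$ fixes $[0,\lambda\m]$; likewise for the $C_{\bm{s}}$'s.)

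For~(2), note that $M$ is an order-preserving homeomorphism $[0,\lambda\m]\to[0,1]$ with $M(U)=(0,1)$. If $F$ is determined by the decorated leaves $\bm{s}_1,\ldots,\bm{s}_q$, then $F=B_{\bm{s}_i}\m$ on $B_{\bm{s}_i}[U]$, so on $M\bigl(B_{\bm{s}_i}[U]\bigr)=C_{\bm{s}_i}\bigl[(0,1)\bigr]$ (by~(1)) the conjugate $T=M\circ F\circ M\m$ equals $M\circ B_{\bm{s}_i}\m\circ M\m=\bigl(M\circ B_{\bm{s}_i}\circ M\m\bigr)\m=C_{\bm{s}_i}\m$. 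These $q$ pieces are affine and, being the $M$-images of the sets $B_{\bm{s}_i}[U]$, their domains are pairwise disjoint open intervals covering $(0,1)$ up to finitely many points, since $M$ is a homeomorphism and $\Phi_{\ms{B}}(U)$ has this property relative to $U$ (Definition~\ref{ref6}). It remains to see that $C_{\bm{s}_i}\m$ has coefficients in $\Zbb[1/(m-1)]$: writing $C_{\bm{s}}=\ppmatrix{a}{b}{0}{1}$, Lemma~\ref{ref4} gives $b\in\Zbb[1/(m-1)]$, and inspection of the generators $C_1,\ldots,C_{m-1},C_f$ shows $a=\pm(m-1)^{-k}$ with $k$ the number of non-$f$ letters of $\bm{s}$; hence $C_{\bm{s}}\m=\ppmatrix{a\m}{-a\m b}{0}{1}$ has both entries $a\m=\pm(m-1)^{k}$ and $-a\m b$ in $\Zbb[1/(m-1)]$.

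I do not expect a serious obstacle here: the statement is essentially a formal consequence of the framework of~\S\ref{ref11}. The only point calling for care is the descent of the two symbolic semiconjugacies through the coding maps, which are not injective, so that one must invoke the common-fibres property in order to land the identity on the whole interval rather than almost everywhere; this is coupled with the separate ---but elementary--- treatment of the generator $f$, which is not one of the IFS symbols. The genuinely nontrivial input, namely that $\pi_{\ms{B}}$ and $\pi_{\ms{C}}$ share their fibres and hence that $M$ is a well-defined homeomorphism, has already been recorded before the statement.
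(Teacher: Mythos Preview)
Your proof is correct and follows the same overall architecture as the paper's: establish~(1) on the generators $1,\ldots,m-1,f$ of $\Sigma$ via symbolic intertwining (the prepend maps $\sigma_j$ and the digit-complementation~$c$), then propagate through the monoid structure, and deduce~(2) formally. The difference is in how the generator step is executed. The paper uses the equivalent description of $M$ as the cumulative distribution function of $(\pi_{\ms{B}})_*P$ and computes $M(B_jx)$ and $M(B_fx)$ through the Bernoulli measure $P$ and the compatibility of $\pi_{\ms{B}},\pi_{\ms{C}}$ with the lexicographic order; you instead invoke directly the defining relation $M\circ\pi_{\ms{B}}=\pi_{\ms{C}}$ together with $\pi_{\ms{B}}\circ\sigma_j=B_j\circ\pi_{\ms{B}}$, $\pi_{\ms{C}}\circ\sigma_j=C_j\circ\pi_{\ms{C}}$ (and their analogues for~$c$). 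Your route is shorter and avoids the order-theoretic and measure-theoretic detour entirely, at the cost of relying more explicitly on the common-fibres fact recorded before the statement; the paper's Claim about $B_f$ and $\phi$ is exactly your intertwining $\pi_{\ms{B}}\circ c=B_f\circ\pi_{\ms{B}}$, proved the same way. Your treatment of~(2) is also slightly more explicit than the paper's, which only records the commuting diagram and declares the rest straightforward; in particular your verification that $C_{\bm{s}}\m$ has coefficients in $\Zbb[1/(m-1)]$ is a detail the paper omits.
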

\begin{proof}
The statement~(2) is a straightforward consequence of~(1); we only present in~\eqref{eq7} the relevant commuting diagram, $\ms{B}$ and $\ms{C}$ being $\set{B_{\mb{s}_1},\ldots,B_{\mb{s}_q}}$ and $\set{C_{\mb{s}_1},\ldots,C_{\mb{s}_q}}$, and 
the middle arrow $S$ the shift map.
\begin{equation}\label{eq7}
\begin{tikzcd}
{[0,\lambda\m]} \arrow[dd, bend right=120,looseness=1.5, "M"'] \arrow[r,"F"] 
& {[0,\lambda\m]} \arrow[dd, bend left=120,looseness=1.5, "M"]  \\
\set{1,\ldots,q}^{\Zbb\p} \arrow[r,"S"] \arrow[u,"\pi_{\ms{B}}"'] \arrow[d,"\pi_{\ms{C}}"] & \set{1,\ldots,q}^{\Zbb\p} \arrow[u,"\pi_{\ms{B}}"] \arrow[d,"\pi_{\ms{C}}"'] \\
{[0,1]} \arrow[r,"T"] & {[0,1]}
\end{tikzcd}
\end{equation}

We prove~(1); since $B_{\argomento}$ and $C_{\argomento}$ are homomorphisms, it is sufficient to establish the identity for the generators $1,\ldots,m-1,f$ of $\Sigma$. Redefine $\ms{B}$ and $\ms{C}$ to be $\set{B_1,\ldots,B_{m-1}}$ and $\set{C_1,\ldots,C_{m-1}}$, respectively.
The key ingredient now is that both $\pi_{\ms{B}}$ and $\pi_{\ms{C}}$ respect the lexicographic ordering: up to countably many exceptions (namely those sequences $\omega=\omega(0)\omega(1)\cdots\in\set{1,\ldots,m-1}^{\Zbb\p}$ that end up in a tail of $1$ or of $m-1$) we have that $\omega$ is less than or equal to $\omega'$ in the lexicographic ordering~$\preceq$ if and only if $\pi_{\ms{B}}(\omega)\le\pi_{\ms{B}}(\omega')$ in $[0,\lambda\m]$
if and only if $\pi_{\ms{C}}(\omega)\le\pi_{\ms{C}}(\omega')$ in $[0,1]$.
This is easily proved by looking at the first index at which $\omega$ and $\omega'$ differ, and using the fact that all the maps $B_j$ and $C_j$ are order isomorphisms.

Fix then $x=\pi_{\ms{B}}(\omega)$, with $M(x)=\pi_{\ms{C}}(\omega)$. For 
$j\in\set{1,\ldots,m-1}$ we have
\begin{multline*}
M\bigl(B_j(x)\bigr)=\mu\bigl([0,B_j(x)]\bigr)=P\bigl(\pi_{\ms{B}}\m[0,B_j(x)]\bigr)\\
=P\bigl(\set{\omega':\pi_{\ms{B}}(\omega')\le\pi_{\ms{B}}(j\omega)}\bigr)=
P\bigl(\set{\omega':\omega'\preceq j\omega}\bigr)\\
=P\bigl(\set{\omega':\pi_{\ms{C}}(\omega')\le C_j(\pi_{\ms{C}}(\omega))}\bigr)\\
=P\bigl(\pi_{\ms{C}}\m[0,C_j(M(x))]\bigr)=\Leb\bigl([0,C_j(M(x))]\bigr)=C_j(M(x)),
\end{multline*}
as desired. We have now to show the analogous identity for the generator~$f$. To this purpose, consider the selfhomeomorphism $\phi$
of the Cantor space $\set{1,\ldots,m-1}^{\Zbb\p}$ induced by exchanging $j$ with $m-j$; clearly $\phi$ is an involution, reverses the lexicographic order, and leaves $P$ invariant.

\smallskip

\paragraph{\emph{Claim}} We have
$B_f(\pi_{\ms{B}}(\omega))=\pi_{\ms{B}}(\phi(\omega))$ and
$C_f(\pi_{\ms{C}}(\omega))=\pi_{\ms{C}}(\phi(\omega))$.

\paragraph{\emph{Proof of Claim}} We compute
\begin{multline*}
\set{B_f(\pi_{\mc{B}}(\omega))}=
\bigcap_{n\ge0} B_f B_{\omega(0)}\cdots B_{\omega(n-1)} [0,\lambda\m]\\
=\bigcap_{n\ge0} B_{\phi(\omega(0))}\cdots B_{\phi(\omega(n-1))} B_f
[0,\lambda\m]
=\set{\pi_{\ms{B}}(\phi(\omega))},
\end{multline*}
because $B_fB_j=B_{\phi(j)}B_f$
for every $j\in\set{1,\ldots,m-1}$, and
$B_f$ fixes $[0,\lambda\m]$ globally. The proof of the other statement is analogous.

\smallskip

Having proved our claim, we compute
\begin{multline*}
M(B_f(x))=\mu\bigl([0,B_f(\pi_{\ms{B}}(\omega))]\bigr)=
P\bigl(\pi_{\ms{B}}\m\set{x':x'\le\pi_{\ms{B}}(\phi(\omega))}\bigl)\\
=
P\bigl(\set{\omega':\pi_{\ms{B}}(\omega')\le\pi_{\ms{B}}(\phi(\omega))}\bigl)
=P\bigl(\set{\omega':\omega'\preceq\phi(\omega)}\bigr)\\
=
P\bigl(\phi\set{\omega':\omega'\preceq\phi(\omega)}\bigr)
=P\bigl(\set{\omega'':\omega\preceq\omega''}\bigr)=
P\bigl(\set{\omega'':\pi_{\ms{C}}(\omega)\le\pi_{\ms{C}}(\omega'')}\bigl)\\
=
P\bigr(\pi_{\ms{C}}\m\set{x:\pi_{\ms{C}}(\omega)\le x}\bigr)
=\Leb\bigr([\pi_{\ms{C}}(\omega),1]\bigr)=
1-M(x)=C_f(M(x)).
\end{multline*}

\end{proof}

\begin{remark}
Note that \emph{the same} Minkowski function $M_m$ conjugates \emph{every} Farey-type map $F$ resulting from a decorated $(m-1)$-ary tree with the corresponding tent map $T$. For example, the Romik map in Figure~\ref{fig3} left is conjugated by $M_3$ with the tent map with branches $C_{11}\m(x)=4x$, $C_{12f}\m(x)=-4x+2$, $C_2\m(x)=2x-1$. This is not to be confused with the setting in~\cite{bocalinden18}, in which the Romik map is conjugated with another tent map, the one with branches $3x$, $-3x+2$, $3x-2$.
\end{remark}

Farey-type maps as in Definition~\ref{ref6} are of the intermittent type~\cite{pomeaumanneville80}: Lebesgue-almost all points spend most of their time in neighborhoods of parabolic cycles. This fact forces the conjugating functions $M$ to be purely singular: for Lebesgue-almost all $x\in[0,\lambda\m]$, the derivative $M'(x)$ exists and has value~$0$. Salem proved in~\cite{salem43} that $M_3$ is H\"older continuous of exponent $\log(2)/\bigl(2\log(\tau)\bigr)$, where $\tau$ is the golden ratio; in the following Theorem~\ref{ref21} we will prove the analogous result for our family $M_m$.

A few preliminaries: we recall (see~\cite{jungers09} or
\cite{guglielmizennaro14} for a detailed presentation) that, given any finite set $\ms{D}$ of real square matrices of the same dimension, the \newword{joint spectral radius} of $\ms{D}$ is the number $\tilde\rho(\ms{D})$ defined by
\begin{equation}\label{eq10}
\tilde\rho(\ms{D})=\limsup_{n\to\infty}\max\set{
\norm{D}^{1/n}:\text{$D$ is a product of $n$ elements of $\ms{D}$}}.
\end{equation}
Here $\norm{D}$ is the operator norm of $D$ induced by some vector norm, whose choice is irrelevant. We will make use of both the spectral and the $\infty$-norm, so we recall their characterizations:
\begin{align*}
\norm{D}_2&=\text{the square root of the spectral radius of $D^TD$};\\
\norm{D}_\infty&=\text{the maximal $1$-norm of rows of $D$}.\\
\end{align*}
The following observation is surprisingly useful, so we state it as a lemma.

\begin{lemma}
Let\label{ref22} $\ms{D}=\set{D_1,\ldots,D_q}$ be any finite subset of $\Mat_{d\times d}\Rbb$. Assume $j,k\in\set{1,\ldots,q}$ (possibly $j=k$) are such that $D_k=D_j^T$ and $\norm{D_j}_2\ge\norm{D}_2$ for every $D\in\ms{D}$. Then $\tilde\rho(\ms{D})=\norm{D_j}_2$.
\end{lemma}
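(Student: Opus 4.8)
The plan is to prove two inequalities. The inequality $\tilde\rho(\ms{D}) \ge \norm{D_j}_2$ is immediate: for each $n$, the $n$-fold product $D_j^n$ is itself a product of $n$ elements of $\ms{D}$, and since $\norm{\argomento}_2$ is the operator norm induced by the Euclidean vector norm, $\norm{D_j^n}_2^{1/n} \ge \rho(D_j)^{n/n}$... actually more carefully: one wants $\limsup_n \norm{D_j^n}_2^{1/n} \ge \norm{D_j}_2$. The cleanest route is to use that $D_k = D_j^T$ is also in $\ms{D}$, so the product $(D_j^T D_j)^n = D_k^n D_j^n$ is a product of $2n$ elements of $\ms{D}$; hence $\tilde\rho(\ms{D}) \ge \limsup_n \norm{(D_j^T D_j)^n}_2^{1/(2n)} = \rho(D_j^T D_j)^{1/2} = \norm{D_j}_2$, using that $D_j^T D_j$ is symmetric positive semidefinite so its spectral radius equals its Euclidean operator norm and Gelfand's formula applies with equality.

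For the reverse inequality $\tilde\rho(\ms{D}) \le \norm{D_j}_2$, the key observation is that the Euclidean operator norm is submultiplicative, so for any product $D = D_{i_1}\cdots D_{i_n}$ of $n$ elements of $\ms{D}$ we have $\norm{D}_2 \le \norm{D_{i_1}}_2 \cdots \norm{D_{i_n}}_2 \le \norm{D_j}_2^{\,n}$, the last step by the hypothesis that $\norm{D_j}_2$ dominates every $\norm{D}_2$ for $D \in \ms{D}$. Taking $n$-th roots and then the $\limsup$ in the definition~\eqref{eq10} gives $\tilde\rho(\ms{D}) \le \norm{D_j}_2$ directly. Combining the two inequalities yields $\tilde\rho(\ms{D}) = \norm{D_j}_2$.

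I do not anticipate a genuine obstacle here; the lemma is essentially a packaging of submultiplicativity together with the fact that $\tilde\rho$ is insensitive to the choice of norm and that, for a symmetric positive semidefinite matrix, the Euclidean operator norm coincides with the spectral radius (so Gelfand's formula for $\tilde\rho$ of the single-matrix family $\{D_j^T D_j\}$ gives equality, not just an inequality). The only point requiring a line of care is making the lower bound argument robust to the fact that $\norm{D_j^n}_2^{1/n}$ need not converge to $\norm{D_j}_2$ when $D_j$ is not symmetric — which is precisely why we pass through the symmetrized product $D_j^T D_j$, exploiting the hypothesis $D_k = D_j^T \in \ms{D}$. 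Note also that the hypothesis covers the case $j = k$, i.e. $D_j$ itself symmetric, in which the lower bound is even more transparent.
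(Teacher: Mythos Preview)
Your proof is correct and follows essentially the same route as the paper, which compresses the argument into the single chain $\norm{D_j}_2=\bigl[\rho(D_kD_j)\bigr]^{1/2}\le\tilde\rho(\ms{D})\le\max_{D\in\ms{D}}\norm{D}_2=\norm{D_j}_2$, citing \cite[Lemma~3.1]{daubechieslag} for the first inequality. One slip to fix: $(D_j^TD_j)^n$ equals $(D_kD_j)^n$, not $D_k^nD_j^n$; this does not affect the argument, since either expression is a product of $2n$ elements of~$\ms{D}$.
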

\begin{proof}
Using~\cite[Lemma~3.1]{daubechieslag}, we have
\[
\norm{D_j}_2=\bigl[\rho(D_kD_j)\bigr]^{1/2}\le
\tilde\rho(\ms{D})\le\max\set{\norm{D}_2:D\in\ms{D}}=\norm{D_j}_2.
\]
\end{proof}

Given $m=3,4,5,\ldots$ and letting
$\ms{A}=\set{A_1,\ldots,A_{m-1}}$, with $A_j$ defined in~\eqref{eq2},
we set
\begin{equation*}
\rho_m=\norm{A_{\floor{m/2}}}_2,\qquad
\alpha_m=\frac{\log(m-1)}{2\log(\rho_m)}.
\end{equation*}

\begin{theorem}
The\label{ref21} following statements are true.
\begin{enumerate}
\item $\tilde\rho(\ms{A})=\rho_m$;
\item $M_m$ is H\"older of exponent $\alpha_m$, and not H\"older of exponent $\beta$, for any $\beta>\alpha_m$.
\end{enumerate}
\end{theorem}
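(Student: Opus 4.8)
The plan is to treat the two statements by quite different routes. For~(1) I would invoke Lemma~\ref{ref22}: by Lemma~\ref{ref4} we have $A_{m-j}=A_{\sharp j}=A_j^T$ for every $j\in\{1,\dots,m-1\}$, so putting $j_0=\floor{m/2}$ and $k_0=m-j_0=\lceil m/2\rceil$ (both in $\{1,\dots,m-1\}$ since $m\ge3$), Lemma~\ref{ref22} applied with $\ms D=\ms A$ gives at once $\tilde\rho(\ms A)=\norm{A_{j_0}}_2=\rho_m$, \emph{provided} $\norm{A_{j_0}}_2\ge\norm{A_j}_2$ for all $j$. To check this I would first record the explicit shape of the branch matrices. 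Writing $s_k=\sin(k\pi/m)/\sin(\pi/m)$ (so $s_0=0$, $s_1=1$, $s_{k+1}=\lambda s_k-s_{k-1}$), a one-line induction from $R^{-1}=\ppmatrix{\lambda}{1}{-1}{0}$ yields
\[
A_j=SR^{-j}=\begin{pmatrix} s_j & s_{j-1}\\ s_{j+1} & s_j\end{pmatrix},\qquad \det A_j=s_j^2-s_{j-1}s_{j+1}=1.
\]
Since $\det A_j=1$, $\norm{A_j}_2^2$ is the larger root of $t+t^{-1}=\operatorname{tr}(A_j^TA_j)$, hence an increasing function of $\operatorname{tr}(A_j^TA_j)=2s_j^2+s_{j-1}^2+s_{j+1}^2$, and standard trigonometric identities ($2\sin^2a=1-\cos2a$, sum-to-product) reduce this trace to $2\bigl(1-\cos(2\pi j/m)\cos^2(\pi/m)\bigr)/\sin^2(\pi/m)$. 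Over $j\in\{1,\dots,m-1\}$ this is largest exactly when $\cos(2\pi j/m)$ is smallest, i.e.\ when $j$ is closest to $m/2$: for $m$ even the unique maximizer is $j=m/2=j_0$, while for $m$ odd both $j=(m\mp1)/2$ work and $(m-1)/2=j_0$ is one of them. This proves~(1).

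For~(2), write $\mu=(\pi_{\ms B})_*P$ as in~\S\ref{ref11}, so $M$ is the distribution function of $\mu$ and $|M(y)-M(x)|=\mu([x,y])$ for $x\le y$ ($\mu$ being nonatomic); thus~(2) amounts to showing $\mu(J)\le c\,|J|^{\alpha_m}$ for all intervals $J\subseteq[0,\lambda\m]$, and that the ratios $\mu(J)/|J|^{\beta}$ are unbounded for $\beta>\alpha_m$. I would use two elementary facts about $g=\ppmatrix{a}{b}{c}{d}\in\SL_2\Rbb$ acting on $I=[p,q]\subseteq[0,\lambda\m]$: first $|g[I]|=|p-q|/(|cp+d|\,|cq+d|)\ge|p-q|\,\bigl(\norm{g}_2^2(1+\lambda^{-2})\bigr)^{-1}$ (from $|ct+d|\le\norm{g\cppvector{t}{1}}_2\le\norm{g}_2\sqrt{1+\lambda^{-2}}$), and second the complementary bound $|g[I]|\le C\norm{g}_2^{-2}|I|$ valid when $I$ stays at fixed positive distance from the repelling projective fixed point of $g$. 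The core estimate is on \emph{cylinder} intervals $J=B_{\omega(0)}\cdots B_{\omega(n-1)}[0,\lambda\m]$: there $\mu(J)=(m-1)^{-n}$ (it equals $P$ of the matching cylinder, up to a $P$-null endpoint set), while submultiplicativity of $\norm{\cdot}_2$ and $\norm{A_j}_2\le\rho_m$ give $\norm{B_{\omega(0)}\cdots B_{\omega(n-1)}}_2\le\norm{L}_2\norm{L\m}_2\,\rho_m^{\,n}$, so $|J|\gtrsim\rho_m^{-2n}$; since $\rho_m^{2\alpha_m}=m-1$ by definition of $\alpha_m$, this gives $\mu(J)=(m-1)^{-n}=\rho_m^{-2\alpha_m n}\lesssim|J|^{\alpha_m}$ with a uniform constant.

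The delicate point—and the one I expect to be the main obstacle—is upgrading this to \emph{all} intervals: the IFS $\ms B$ is \emph{not} of bounded distortion, since $B_1=L$ and $B_{m-1}$ are parabolic, so level-$n$ cylinders abutting $0$ or $\lambda\m$ have length of order $n^{-1}$ rather than exponentially small. I would handle this by peeling. Given $J=[x,y]$, remove the longest common prefix $\mb c$ (which divides $\mu(J)$ by $(m-1)^{|\mb c|}$ and multiplies lengths by at most $\rho_m^{2|\mb c|}(1+\lambda^{-2})$), reducing to an interval meeting two distinct level-$1$ cylinders; if they are non-adjacent, the interval contains an entire level-$1$ cylinder and the bound is trivial, and if they are adjacent, splitting at their common endpoint reduces to an interval $[z,\lambda\m]$ or $[0,z]$ having a parabolic fixed point as one endpoint. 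For such an interval, peeling the maximal parabolic power $B_{m-1}^{\,j}$ (resp.\ $B_1^{\,j}$) leaves length bounded below by $|B_{m-1}^{\,j+1}[0,\lambda\m]|\gtrsim(j+1)^{-2}$ (since $\norm{B_{m-1}^{\,j}}_2\asymp j$) and $\mu$-measure at most $(m-1)^{-j}$; as $(m-1)^{-j}(j+1)^{2\alpha_m}$ stays bounded the parabolic cylinders are harmless, and unwinding the peeling---using $\rho_m^{2\alpha_m}=m-1$ to cancel the accumulated factors and $a^{\alpha_m}+b^{\alpha_m}\le2(a+b)^{\alpha_m}$ to recombine the two halves---yields $\mu(J)\le c\,|J|^{\alpha_m}$ for every $J$. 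Hence $M$ is H\"older of exponent $\alpha_m$.

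Finally, to rule out larger exponents, I would exhibit the extremal cylinders. Let $\mb w$ be the length-$2\ell$ word $j_0k_0j_0k_0\cdots$ with $j_0,k_0$ as in~(1), and $J_\ell=B_{\mb w}[0,\lambda\m]=\bigl(L(A_{j_0}A_{j_0}^T)^{\ell}L\m\bigr)[0,\lambda\m]$ (using $A_{k_0}=A_{j_0}^T$). Then $\mu(J_\ell)=(m-1)^{-2\ell}$, and $A_{j_0}A_{j_0}^T$ is symmetric positive definite with eigenvalues $\rho_m^{2}$ and $\rho_m^{-2}$, so $\norm{(A_{j_0}A_{j_0}^T)^{\ell}}_2=\rho_m^{2\ell}$ and its repelling projective fixed point (the eigendirection for $\rho_m^{-2}$) lies outside $[0,\lambda\m]$, because $A_{j_0}A_{k_0}$ maps $[0,\infty]$ into a compact subinterval of $(0,\infty)$. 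The complementary length bound then gives $|J_\ell|\asymp\rho_m^{-4\ell}$, whence
\[
\frac{\mu(J_\ell)}{|J_\ell|^{\beta}}\asymp\Bigl(\frac{\rho_m^{4\beta}}{(m-1)^2}\Bigr)^{\ell}\longrightarrow\infty\qquad(\ell\to\infty)
\]
for every $\beta>\alpha_m$, since $\rho_m^{4\beta}>(m-1)^2\iff\beta>\alpha_m$. This would complete the argument; as indicated, the one genuinely technical step is the distortion-free reduction in the previous paragraph, where the parabolicity of $B_1$ and $B_{m-1}$ must be absorbed.
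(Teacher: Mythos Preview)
Your argument is correct, but takes a genuinely different route from the paper in two places.

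For (1), both proofs invoke Lemma~\ref{ref22} after the transpose identity $A_{m-j}=A_j^T$, and reduce to showing $\norm{A_{\floor{m/2}}}_2$ is maximal among the $\norm{A_j}_2$. The paper argues geometrically: from the singular value decomposition it gets $\norm{A}_2^{-2}=\exp(-d(i,A(i)))$, then reduces to maximizing $d(i,R^{-j}(i))$ via the hyperbolic cosine rule on the isosceles triangle $\zeta,i,R^{-j}(i)$. Your route is purely computational: the explicit form $A_j=\ppmatrix{s_j}{s_{j-1}}{s_{j+1}}{s_j}$ together with trigonometric identities gives $\operatorname{tr}(A_j^TA_j)=2\bigl(1-\cos(2\pi j/m)\cos^2(\pi/m)\bigr)/\sin^2(\pi/m)$, which is visibly maximized at $j=\floor{m/2}$. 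Your approach is more elementary; the paper's ties the computation to the hyperbolic picture.

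For (2) the contrast is sharper. The paper passes through the embedding $\PSL_2\Rbb\hookrightarrow\SO_{2,1}\Rbb$, proves the length bound $|J_{\mb{j}}|>(8\norm{\mb{A}_{\mb{j}}}_\infty)^{-1}$ for the $3\times3$ images, invokes Berger--Wang to control $\norm{\mb{A}_{\mb{j}}}_\infty^{1/n}$, and then transfers to arbitrary intervals in one step by counting how many level-$n$ cylinder endpoints can fall in $[x,x']$. You avoid the $\SO_{2,1}$ machinery and the $\varepsilon$-argument entirely: since $\norm{A_j}_2\le\rho_m$ for every $j$, plain submultiplicativity already gives $\norm{A_{\mb{j}}}_2\le\rho_m^{n}$ for \emph{all} $n$, whence $|J_{\mb{j}}|\gtrsim\rho_m^{-2n}$ directly. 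This is cleaner. The price is that your passage from cylinders to arbitrary intervals is a case analysis (peel the common prefix, split at the shared endpoint, peel one more letter, apply the parabolic estimate) rather than a single counting step; it works because the peeling has bounded depth and each peel changes $\mu(J)/|J|^{\alpha_m}$ by a factor independent of the peel length, thanks to $\rho_m^{2\alpha_m}=m-1$. The optimality argument is essentially the same as the paper's, which uses $H=A_{j_0}^TA_{j_0}$ in place of your $A_{j_0}A_{j_0}^T$; both are symmetric with eigenvalues $\rho_m^{\pm2}$ and the analysis is identical.
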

\begin{proof}
By Lemma~\ref{ref4}, for every $j\in\set{1,\ldots,m-1}$ we have $A_j^T=A_{\sharp j}=A_{m-j}$. Thus (1) follows from Lemma~\ref{ref22} once we prove that 
$A_{\floor{m/2}}$ has maximal spectral norm among matrices in $\ms{A}$.

Let $A$ be any matrix in $\SL_2\Rbb$; then $A$ has singular value decomposition $A=O_1\ppmatrix{s}{}{}{1/s}O_2$, with $O_1,O_2\in\SO_2\Rbb$ and $s=\norm{A}_2$. All orthogonal matrices fix $i\in\mc{H}$ and leave invariant the hyperbolic distance $d$. We thus obtain
\begin{multline*}
d(i,A(i))=d\bigl(O_1\m(i),\ppmatrix{s}{}{}{1/s}O_2(i)\bigr)\\
=
d(i,\ppmatrix{s}{}{}{1/s}(i))=d(i,s^2i)=2\log(s),
\end{multline*}
whence $\exp\bigl(-d(i,A(i))\bigr)=\norm{A}_2^{-2}$, an identity that will reappear in the final section.

It is sufficient to show that $d(i,A_{\floor{m/2}}(i))\ge
d(i,A_j(i))$, for every $j$. Since $A_j=SR^{-j}$ and $S$ is orthogonal, we have
as above $d(i,A_j(i))=d(i,R^{-j}(i))$.
The three points $\zeta,i,R^{-j}(i)$ are the vertices of a hyperbolic isosceles triangle (degenerate, if $m$ is even and $j=m/2$), with angle at $\zeta$ equal to $2j\pi/m$; see the dashed triangle in Figure~\ref{fig1} for the case $m=7$, $j=2$. By the hyperbolic cosine rule~\cite[\S7.12]{beardon95} we get
\[
\cosh\bigl(d(i,R^{-j}(i))\bigr)=\bigl[\cosh(d(i,\zeta))\bigr]^2-
\bigl[\sinh(d(i,\zeta))\bigr]^2 \cos(2j\pi/m),
\]
which is maximized by $j=\floor{m/2}$, as required.

In order to prove~(2) we recall the well known group embedding of $\PSL_2\Rbb$ into $\SO_{2,1}\Rbb$:
\[
\begin{pmatrix}
a & b\\
c & d
\end{pmatrix}
\mapsto
\begin{pmatrix}
ad+bc & ac-bd & ac+bd \\
ab-cd & (a^2-b^2-c^2+d^2)/2 & (a^2+b^2-c^2-d^2)/2 \\
ab+cd & (a^2-b^2+c^2-d^2)/2 & (a^2+b^2+c^2+d^2)/2
\end{pmatrix}.
\]

\smallskip

\paragraph{\emph{Claim}}
\begin{itemize}
\item[(i)] Letting $\mb{A}$ be the image of $A$ under the above embedding, we have $\tilde\rho\bigl(\set{\mb{A}_1,\ldots,\mb{A}_{m-1}}\bigr)=
\tilde\rho(\ms{A})^2=
\rho_m^2$.
\item[(ii)] Let $\mb{j}=j_0\cdots j_{n-1}\in\set{1,\ldots,m-1}^*$,
and let $J_{\mb{j}}=B_{j_0}\cdots B_{j_{n-1}}[0,\lambda\m]$ be the corresponding cylinder. Then
$8\m\norm{\mb{A}_{j_0}\cdots\mb{A}_{j_{n-1}}}_\infty\m<\length(J_{\mb{j}})$.
\end{itemize}

\paragraph{\emph{Proof of Claim}} Let $\rho(D)$ be the spectral radius of the matrix $D$.

(i) This follows from the
Berger-Wang characterization~\cite{bergerwang92} of the joint spectral radius
\begin{equation*}
\tilde\rho(\ms{D})=\limsup_{n\to\infty}\max\set{
\rho(D)^{1/n}:\text{$D$ is a product of $n$ elements of $\ms{D}$}},
\end{equation*}
and the identity
$\rho(\mb{A})=\rho(A)^2$, which is valid for every $A\in\PSL_2\Rbb$.
Indeed $A$ is conjugate to a matrix either of the form
\[
\begin{pmatrix}
\cos(t) & -\sin(t) \\
\sin(t) & \cos(t)
\end{pmatrix},\text{ or }
\begin{pmatrix}
\exp(t/2) &  \\
 & \exp(-t/2)
\end{pmatrix},\text{ or }
\begin{pmatrix}
1 & t \\
 & 1
\end{pmatrix},
\]
and for such matrices the identity follows by direct inspection (see equations~(2) in~\cite{panti20b}).

(ii) Let
\[
A=\begin{pmatrix}
a & b \\
c & d
\end{pmatrix}
=A_{j_0}\cdots A_{j_{n-1}},\quad
D=\begin{pmatrix}
1 &  \\
\lambda & 1
\end{pmatrix}A.
\]
Then the columns of $D$ give projective coordinates for the endpoints of
$J_{\mb{j}}=[b/(\lambda b+d),a/(\lambda a+c)]$. Moreover,
$a,b,c,d$ are all nonnegative, and
we have
\begin{multline*}
8\norm{\mb{A}}_\infty\ge
8\norm{\text{$3$rd row of $\mb{A}$}}_1=
8\bigl(ab+cd+\max\set{a^2+c^2,b^2+d^2}\bigr)\\
>\max\bigl\{(2a+c)^2,(2b+d)^2\bigr\}
>(\lambda a+c)(\lambda b+d)=1/\length(J_{\mb{j}}),
\end{multline*}
which settles our claim.

\smallskip

Fix now $\varepsilon>0$ and let $\rho=\rho_m$, $\alpha=\alpha_m$,
$\theta=\rho^2+\varepsilon$. 
By~(i) and~\eqref{eq10} there exists $n_1$ such that, for every word~$\mb{j}$ of length $\ge n_1$, we have
$\theta>\norm{\mb{A}_{\mb{j}}}_{\infty}^{1/\length(\mb{j})}$.
Let $l_1$ be the minimal length of cylinders of level $n_1$, and
choose $0\le x<x'\le\lambda\m$ such that $x'-x\le l_1$. Also, let $n\ge n_1$ be minimum such that $[x,x']$ contains a cylinder of level $n$, say $J_{\mb{h}}$. By~(ii) we have
$x'-x>8\m\norm{\mb{A}_{\mb{h}}}_{\infty}\m>8\m\theta^{-n}$, and thus
\begin{equation}\label{eq8}
n>-\log_\theta(x'-x)-\log_\theta(8).
\end{equation}
On the other hand, $[x,x']$ contains at most $1+2(m-2)$ endpoints of cylinders of level $n$; thus
$Mx'-Mx<2(m-1)(m-1)^{-n}$ and
\begin{equation}\label{eq9}
n<1+\log_{m-1}(2)-\log_{m-1}(Mx'-Mx).
\end{equation}
Eliminating $n$ from~\eqref{eq8} and~\eqref{eq9} and rearranging terms we obtain
\[
\log(Mx'-Mx)<\log(C)+\log(x'-x)\frac{\log(m-1)}{\log(\theta)},
\]
for some constant $C>2(m-1)$. We let $\varepsilon$ tend to $0$ and obtain
$Mx'-Mx\le C(x'-x)^\alpha$, the requested H\"older condition for all pairs $x<x'$ at distance $\le l_1$. Replacing $C$ with $\max\set{C,l_1^{-\alpha}}$ gives the condition for all pairs in $[0,\lambda\m]$.

It remains to show the optimality of $\alpha$. Let
$H=A_{m-\floor{m/2}}A_{\floor{m/2}}$; then the columns of
\[
\begin{pmatrix}
1 & \\
\lambda & 1
\end{pmatrix}H^n=
\begin{pmatrix}
1 & \\
\lambda & 1
\end{pmatrix}
\begin{pmatrix}
a_n & b_n\\
c_n & d_n
\end{pmatrix}=
\begin{pmatrix}
1 & \\
\lambda & 1
\end{pmatrix}E
\begin{pmatrix}
\rho^{2n} & \\
 & \rho^{-2n}
\end{pmatrix}E\m,
\]
where $E$ is a diagonalizing matrix, give the projective coordinates of the endpoints $x_n,x'_n$ of a cylinder of level $2n$ and length
$\bigl((\lambda a_n+c_n)(\lambda b_n+d_n)\bigr)\m$.
A straightforward computation shows that this length is asymptotic to
$C\rho^ {-4n}$, for a constant~$C$ depending on $\lambda$ and the entries of $E$. But then, for every $\beta=\alpha+\varepsilon>\alpha$, we have
\[
\lim_{n\to\infty}\frac{Mx_n'-Mx_n}{(x_n'-x_n)^\beta}=
\lim_{n\to\infty}\frac{(m-1)^{-2n}}{C^\beta \rho^{-4n\alpha}\rho^{-4n\varepsilon}}=
\lim_{n\to\infty}\frac{(m-1)^{-2n}}{C^\beta (m-1)^{-2n} \rho^{-4n\varepsilon}}=
\infty;
\]
thus $M$ is not H\"older of exponent~$\beta$.
\end{proof}

\section{End of proof of Theorem~\ref{ref8}}\label{ref12}

We can now conclude the proof of Theorem~\ref{ref8} by establishing~(1).
Let us recall the setting: $\mb{s}_1,\ldots,\mb{s}_q$ are the leaves of a decorated $(m-1)$-ary tree, and $K\subseteq[0,\lambda\m]$ is the attractor
of $\sharp\ms{B}=\set{B_{\sharp\mb{s}_1},\ldots,B_{\sharp\mb{s}_q}}$.
By Theorem~\ref{ref19}(1) (whose proof does not depend on Theorem~\ref{ref8}(1)), the Minkowski homeomorphism $M:[0,\lambda\m]\to[0,1]$ satisfies $M\circ B_{\sharp\mb{s}_i}\circ M\m=C_{\sharp\mb{s}_i}$ for all $i$. As a consequence $K$ is the $M\m$-image of the attractor $K_1$ of $\sharp\ms{C}=\set{C_{\sharp\mb{s}_1},\ldots,C_{\sharp\mb{s}_q}}$.

Let $\mb{s}=j_0\cdots j_{l-1}f^e$ be any element of $\Sigma$, with $e\in\set{0,1}$; note that $l=l(\mb{s})$ is the level of $\mb{s}$ in the $(m-1)$-ary tree.
As an affine map, $C_{\mb{s}}(x)$ equals $(-1)^e(m-1)^{-l}x+h(m-1)^{-l}$, for some integer $0\le h\le(m-1)^l$. The subgroup $G_\Sigma$ of $\Aff\Rbb$ generated by the monoid $C_\Sigma$ is endowed with the topology of pointwise convergence, which is the same as the topology of convergence over the two-point set $\set{0,1}$. More simply ---and equivalently--- we will use the euclidean topology induced by the embedding $G_\Sigma\ni(ax+b)\mapsto(a,b)\in\Rbb^*\times\Rbb$.

Now, although $G_\Sigma$ is not discrete (for example, $C_{0^l}C_{0^{l-1}1}\m(x)=x-(m-1)^{-l}$), the following fact holds.

\smallskip

\paragraph{\emph{Claim}} The identity function $x$ does not belong to the closure of the set
$\set{C\m D:C,D\in C_\Sigma\text{ and }C\not=D}$.

\paragraph{\emph{Proof of Claim}} The slope of the composition of two affine functions is the product of the slopes of the factors; every element in $G_\Sigma$ has thus slope in $\pm(m-1)^\Zbb$. In particular, $C\m D$ may approach $x$ only if $C$ and $D$ have the same slope.
We thus must have $C(x)=(-1)^e(m-1)^{-l} x+h(m-1)^{-l}$ and
$D(x)=(-1)^e(m-1)^{-l} x+k(m-1)^{-l}$, with $h,k$ different integers.
But then $C\m D(x)=x+(-1)^e(k-h)$, which is plainly isolated from the identity.

\smallskip

From the definition of a decorated tree it is readily seen that 
$\sum_{1\le i\le q}(m-1)^{-l(\mb{s}_i)}=1$. Since $l(\sharp\mb{s}_i)=l(\mb{s}_i)$ and $(m-1)^{-l(\sharp\mb{s}_i)}$ is the contraction factor of $C_{\sharp\mb{s}_i}$, the similarity dimension of $K_1$
is~$1$~\cite[p.714]{hutchinson81}.

As in the proof of Lemma~\ref{ref4}, by the Ping-Pong Lemma the functions
$C_{\mb{s}_1},\ldots,C_{\mb{s}_q}$ generate a free monoid. 
Again by Lemma~\ref{ref4}, the elements $\mb{s}_1,\ldots,\mb{s}_q$ generate a free submonoid of $\Sigma$, and so do their antiisomorphic images
$\sharp\mb{s}_1,\ldots,\sharp\mb{s}_q$; therefore $C_{\sharp\mb{s}_1},\ldots,C_{\sharp\mb{s}_q}$ generate a free monoid as well.
Given a word $u=\sharp\mb{s}_{i(0)}\cdots\sharp\mb{s}_{i(t-1)}\in
\set{\sharp\mb{s}_1,\ldots,\sharp\mb{s}_q}^*$, we abbreviate
$C_{\sharp\mb{s}_{i(0)}}\cdots C_{\sharp\mb{s}_{i(t-1)}}$ by $C_u$. Then, by
the above claim and the freeness of the monoid generated by $C_{\sharp\mb{s}_1},\ldots,C_{\sharp\mb{s}_q}$, the identity function does not belong to the closure of the set of all products $C_u\m C_v$, where $u,v$ are different words in
$\set{\sharp\mb{s}_1,\ldots,\sharp\mb{s}_q}^*$.
By the main result of~\cite{bandtgraf92}, $K_1$ has positive Hausdorff measure in its similarity dimension; in our case this means positive Lebesgue measure.
By~\cite[Theorem~2.2 and Corollary~2.3]{schief94},
$\sharp\ms{C}$ satisfies the strong open set condition w.r.t.~an open set $V_1$ whose closure equals $K_1$; since $M$ is a homeomorphism, by taking $V=M\m[V_1]$ we obtain (1.1) and (1.2) of Theorem~\ref{ref8}.
As the sum of the contraction ratios of the elements of $\sharp\ms{C}$ is~$1$ and the images of $V_1$ are disjoint, we surely have $\Leb\bigl(V_1\setminus\Phi_{\sharp\ms{C}}(V_1)\bigr)=0$. However, since~$M$ is a singular function, the corresponding statement for $V$ ---which is precisely Theorem~\ref{ref8}(1.3)--- is not immediate. 

We let then, as in the proof of~\cite[Corollary~1.2]{peres_et_al01},
$W=V\setminus\Phi_{\sharp\ms{B}}(V)$ and observe that for $u,v\in\set{\sharp\mb{s}_1,\ldots,\sharp\mb{s}_q}^*$ different words as above, $B_u[W]\cap B_v[W]=\emptyset$; therefore
\begin{equation}\label{eq11}
\sum\bigl\{\Leb(B_u[W]):
u\in\set{\sharp\mb{s}_1,\ldots,\sharp\mb{s}_q}^*\bigr\}
\le\Leb(V)<\infty.
\end{equation}
Say $B_{u}=\ppmatrix{a}{b}{c}{d}$; then the maximum and minimum absolute values of the derivative of $B_{u}$ in $[0,\lambda\m]$ are achieved at the interval endpoints, and thus are $B_{u}'(0)=d^{-2}$ and $B_{u}'(\lambda\m)=(\lambda\m c+d)^{-2}$.
Letting
\begin{equation*}
r_{u}=\min\set{d^{-2},(\lambda\m c+d)^{-2}}=
\bigl[\max\set{\abs{d},\abs{\lambda\m c+d}}\bigr]^{-2},
\end{equation*}
we have $\Leb(B_u[W])\ge r_u\Leb(W)$.
If we can establish
\begin{equation}\label{eq12}
\sum\bigl\{r_u:u\in\set{\sharp\mb{s}_1,
\ldots,\sharp\mb{s}_q}^*\bigr\}=\infty,
\end{equation}
then~\eqref{eq11} will imply $\Leb(W)=0$, as required.

Let $\vertiii{\argomento}$ be the vector norm on $\Rbb^2$ given by $\vertiii{\cppvector{x}{y}}=\bigl\Vert\ppmatrix{\lambda\m}{}{}{1}\cppvector{x}{y}\bigr\Vert_\infty$, and note that every
$u\in\set{\sharp\mb{s}_1,\ldots,\sharp\mb{s}_q}^*$, viewed as an element of $\Sigma$, is the $\sharp$-image of a unique $v\in\set{\mb{s}_1,\ldots,
\mb{s}_q}^*$. From Lemma~\ref{ref4} we have $A_u^T=A_v$, and therefore
\begin{align*}
r_u&=\biggl\Vert\biggl(
\begin{pmatrix}
0 & 1
\end{pmatrix}
B_{u}
\begin{pmatrix}
\lambda\m & \\
1 & 1
\end{pmatrix}
\biggr)^T\biggr\Vert_\infty^{-2}\\
&=\biggl\Vert\biggl(
\begin{pmatrix}
0 & 1
\end{pmatrix}
\begin{pmatrix}
1 & \\
\lambda & 1
\end{pmatrix}
A_{u}
\begin{pmatrix}
1 & \\
-\lambda & 1
\end{pmatrix}
\begin{pmatrix}
\lambda\m & \\
1 & 1
\end{pmatrix}
\biggr)^T\biggr\Vert_\infty^{-2}\\
&=\biggl\Vert
\begin{pmatrix}
\lambda\m & \\
 & 1
\end{pmatrix}
A_{v}
\begin{pmatrix}
\lambda \\
1
\end{pmatrix}
\biggr\Vert_\infty^{-2}\\
&=\vertiii{
A_{v}
\begin{pmatrix}
\lambda \\
1
\end{pmatrix}}^{-2}.
\end{align*}
Thus~\eqref{eq12} reduces to
\begin{equation}\label{eq13}
\sum\biggl\{\vertiii{A_{v}
\begin{pmatrix}
\lambda \\
1
\end{pmatrix}}^{-2}:
v\in\set{\mb{s}_1,\ldots,\mb{s}_q}^*\biggr\}=\infty.
\end{equation}
All vector norms are equivalent, so we safely replace $\vertiii{\argomento}$ with the spectral norm $\norm{\argomento}_2$; also, as noted in the proof of Theorem~\ref{ref21},
\[
\biggl\lVert A_{v}
\begin{pmatrix}
\lambda \\
1
\end{pmatrix}\biggr\rVert_2^{-2}\ge
\norm{A_{v}}_2^{-2}
\biggl\lVert
\begin{pmatrix}
\lambda \\
1
\end{pmatrix}\biggr\rVert_2^{-2}
=
\exp\bigl(-d(i,A_{v}(i))\bigr)\,
\biggl\lVert
\begin{pmatrix}
\lambda \\
1
\end{pmatrix}\biggr\rVert_2^{-2}.
\]
Thus~\eqref{eq13} reduces again to the divergence of the Poincar\'e series
\begin{equation}\label{eq14}
\sum\bigl\{\exp(-d(i,A_{v}(i))\bigr):
v\in\set{\mb{s}_1,\ldots,\mb{s}_q}^*\bigr\}.
\end{equation}

Now, since $\Gamma$ is finitely generated, it is of divergence type (see~\cite[Chapter~1]{nicholls89} for a nice introduction to these topics). Therefore the series in~\eqref{eq14} would surely diverge if $A_{v}$ were to range over all of $\Gamma^\pm$. We take advantage of the fact, mentioned in~\S\ref{ref2}, that $\Gamma^\pm$ is an amalgamated product of dihedral groups; this implies that every element of $\Gamma^\pm$ can be uniquely written as a \newword{reduced word} in the generators $S,R,F$, the adjective ``reduced'' referring to the restrictions:
\begin{enumerate}
\item $F$ may occur at most once, and precisely at the word end;
\item neither $S^2$ nor $R^m$ occur.
\end{enumerate}
Let $\mc{Q}$ be the set of reduced words that do not begin with $R$; from the above mentioned divergence one easily sees that
\begin{equation}\label{eq15}
\sum\bigl\{\norm{A}_2^{-2}:
A\in\mc{Q}\bigr\}=\infty.
\end{equation}

Each $A_{\mb{s}_i}$ is a product of $A_1=SR^{m-1}$, $A_2=SR^{m-2}$, $\ldots$, $A_{m-1}=SR$, followed by an eventual $F$; therefore it is already in reduced form, belongs to $\mc{Q}$, begins with $S$ and ends either with $R^j$ or with $R^jF$. An \newword{initial segment} of $A_{\mb{s}_i}$ is any reduced word that can be obtained from $A_{\mb{s}_i}$ by first deleting the final $R^j$ or $R^jF$, and then recursively deleting any final $S$ or final $R^h$ (with $h$ maximal), until the empty word~$\emptyset$ is reached.
Let $\mc{Q}'$ be the finite set that contains every initial segment of every $A_{\mb{s}_i}$ (for $1\le i\le q$), as well as every word obtainable from these segments by adding a final $F$.

\begin{example}
Consider\label{ref23} the rightmost decorated tree in Figure~\ref{fig2}; here $m=4$.
We have
\begin{gather*}
A_{\mb{s}_1}=A_1=SR^3,\,
A_{\mb{s}_2}=A_2 A_1 A_f=SR^2SR^3F,\\
A_{\mb{s}_3}=A_2^2A_f=SR^2SR^2F,\,
A_{\mb{s}_4}=A_2A_3=SR^2SR,\,
A_{\mb{s}_5}=A_3A_f=SRF.
\end{gather*}
The set of initial segments of $A_{\mb{s}_2}$ is then $\set{SR^2S,SR^2,S,\emptyset}$, and $\mc{Q}'=\set{S,\emptyset,SR^2S,SR^2,SF,F,SR^2SF,SR^2F}$.
\end{example}

The situation is clarified by looking at the Caley graph of $\Gamma^\pm$ over the generators $S,R,F$. Caley graphs for the extended Hecke groups are made of two identical layers connected by vertical $F$-edges. Each layer has the structure of an infinite $m$-ary tree, with nodes replaced by $m$-long
circuits of $R$-edges, oriented clockwise in the upper layer and counterclockwise in the lower one. If we look at the portion of the graph that lies between the identity~$\emptyset$ and the leaves $A_{\mb{s}_1},\ldots,A_{\mb{s}_q}$, we readily realize that $\mc{Q}'$ is just the set of nodes in this finite portion which are different from the leaves and not connected to the leaves by an $F$-edge. See Figure~\ref{fig8} for the case in Example~\ref{ref23}; a few nodes are labelled, the black dots in the upper layer correspond to the undecorated leaves of the tree, while the white dots in the lower layer to the decorated ones.

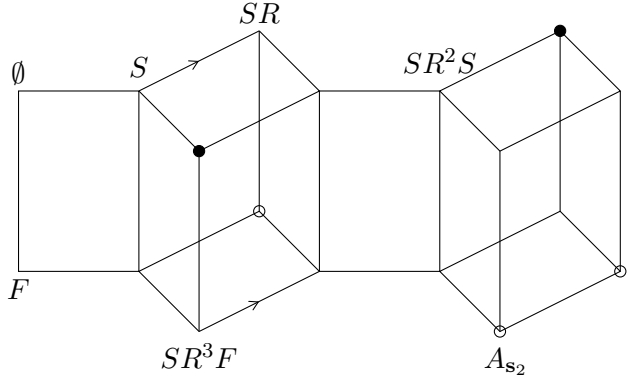
\begin{figure}[h!]
\begin{tikzpicture}[scale=0.8]
\node at (0,4.3) [] {$\emptyset$};
\node at (0,0.7) [] {$F$};
\node at (2,4.4) [] {$S$};
\node at (4,5.3) [] {$SR$};
\node at (3,-0.4) [] {$SR^3F$};
\node at (7,4.5) [] {$SR^2S$};
\node at (8.1,-0.5) [] {$A_{\mb{s}_2}$};
\node at (3,3) [vertex2]  {};
\node at (4,2) [vertex1]  {};
\node at (9,5) [vertex2]  {};
\node at (8,0) [vertex1]  {};
\node at (10,1) [vertex1]  {};
\draw[middlearrow={angle 60}] (2,4)--(4,5);
\draw (4,5)--(5,4)--(3,3)--(2,4);
\draw (0,4)--(2,4)--(2,1)--(0,1)--cycle;
\draw[middlearrow={angle 60}] (3,0)--(5,1);
\draw (5,1)--(4,2)--(2,1)--(3,0);
\draw (4,5)--(4,2);
\draw (3,3)--(3,0);
\draw (5,4)--(7,4)--(7,1)--(5,1)--cycle;
\draw (7,4)--(9,5)--(10,4)--(8,3)--cycle;
\draw (7,1)--(9,2)--(10,1)--(8,0)--cycle;
\draw (9,5)--(9,2);
\draw (8,3)--(8,0);
\draw (10,4)--(10,1);
\end{tikzpicture}
\caption{A portion of the Cayley graph of $\Gamma_4^\pm$}
\label{fig8}
\end{figure}

A brief pondering on Figure~\ref{fig8} reveals that every $A\in\mc{Q}$ factors uniquely as $A=A'Q'$, with $A'$ a product of $A_{\mb{s}_1},\ldots,A_{\mb{s}_q}$ and $Q'\in\mc{Q}'$ (note that $A'Q'$ is not necessarily reduced).
Let $c=\min\bigl\{\norm{(Q')\m}_2^{-2}:Q'\in\mc{Q}'\bigr\}$. Since norms are submultiplicative, from $A'=A(Q')\m$ we obtain $\norm{A'}_2^{-2}\ge c\norm{A}_2^{-2}$, and this implies that the series in~\eqref{eq14} is bounded from below by
\[
\frac{c}{\mathrm{card}(\mc{Q}')}
\sum\bigl\{\norm{A}_2^{-2}:A\in\mc{Q}\bigr\}.
\]
The equality in~\eqref{eq15} shows then that the series in~\eqref{eq14} diverges, thus completing the proof of Theorem~\ref{ref8}.


\end{document}